\documentclass[11pt,reqno]{amsart}

\usepackage{amsmath,amssymb,amsthm,mathtools}
\usepackage{geometry}
\usepackage{enumitem}
\usepackage{booktabs}
\usepackage{xcolor}
\usepackage[colorlinks=true,linkcolor=blue,citecolor=blue,urlcolor=blue]{hyperref}
\usepackage{aliascnt}
\usepackage[capitalise,noabbrev]{cleveref}
\usepackage{fancyhdr}

\hypersetup{
  pdftitle={Lower bounds for some value sets over finite fields: incidence geometry and Bourgain's group expansion theorem},
  pdfauthor={Xiyu Hu},
  pdfsubject={Value sets generated by structured sequences over prime fields},
  pdfcreator={pdflatex}
}
\setlist{itemsep=0pt,topsep=0.3em,parsep=0pt,partopsep=0pt}
\allowdisplaybreaks
\numberwithin{equation}{section}
\newtheorem{theorem}{Theorem}[section]

\newaliascnt{proposition}{theorem}
\newtheorem{proposition}[proposition]{Proposition}
\aliascntresetthe{proposition}

\newaliascnt{lemma}{theorem}
\newtheorem{lemma}[lemma]{Lemma}
\aliascntresetthe{lemma}

\newaliascnt{corollary}{theorem}
\newtheorem{corollary}[corollary]{Corollary}
\aliascntresetthe{corollary}

\theoremstyle{definition}
\newaliascnt{definition}{theorem}

\aliascntresetthe{definition}

\theoremstyle{remark}
\newaliascnt{remark}{theorem}
\newtheorem{remark}[remark]{Remark}
\aliascntresetthe{remark}

\crefname{theorem}{Theorem}{Theorems}
\Crefname{theorem}{Theorem}{Theorems}
\crefname{proposition}{Proposition}{Propositions}
\Crefname{proposition}{Proposition}{Propositions}
\crefname{lemma}{Lemma}{Lemmas}
\Crefname{lemma}{Lemma}{Lemmas}
\crefname{corollary}{Corollary}{Corollaries}
\Crefname{corollary}{Corollary}{Corollaries}
\crefname{definition}{Definition}{Definitions}
\Crefname{definition}{Definition}{Definitions}
\crefname{remark}{Remark}{Remarks}
\Crefname{remark}{Remark}{Remarks}

\newcommand{\F}{\mathbb F}
\newcommand{\Pone}{\mathbb P^1}
\newcommand{\Aff}{\operatorname{Aff}}
\newcommand{\SL}{\operatorname{SL}}
\newcommand{\PSL}{\operatorname{PSL}}
\newcommand{\GL}{\operatorname{GL}}
\newcommand{\id}{\operatorname{id}}

\newcommand{\ord}{\operatorname{ord}}

\newcommand{\cE}{\mathcal E}

\newcommand{\cL}{\mathcal L}
\newcommand{\cN}{\mathcal N}
\newcommand{\cP}{\mathcal P}
\newcommand{\cS}{\mathcal S}

\newcommand{\cV}{\mathcal V}

\title{Lower bounds for some value sets over finite fields:\\ incidence geometry and Bourgain's group expansion theorem}
\author{Xiyu Hu}
\address{School of Mathematical Sciences, University of Chinese Academy of Sciences}
\email{hxyqpr@gmail.com}
\date{}
\subjclass[2020]{11B50, 11B65, 11T23, 20D60, 52C10}
\keywords{value sets, factorial residues, $q$-factorials, derangements, binomial coefficients, Catalan numbers, M\"obius transformations, finite fields, incidence geometry, expansion in $\SL_2$}

\begin{document}

\begin{abstract}
We develop two transition principles for lower-bounding value sets generated by structured sequences over prime fields.  A reciprocal-affine family with $M$ internal transitions and bounded quotient multiplicity has image size $\gg \min\{M,p\}^{8/15}$.  This recovers the factorial-residue bound and yields the same exponent for arithmetic Pochhammer products, Gaussian $q$-factorials, derangement numbers, and the numbers of ordered subsets.  A second theorem treats nonzero sequences whose consecutive ratios evolve under a nondegenerate M\"obius transformation: their value sets have size $\gg \min\{M,p\}^{1/2+\eta}$ for an absolute constant $\eta>0$.  As consequences, fixed rows of Pascal's triangle and the initial half-blocks of the Catalan and central binomial sequences exceed the square-root scale.  The proofs combine transition quotients with, respectively, Cartesian-product point-line incidence geometry and Bourgain's expansion-based incidence theorem in $\SL_2(\F_p)$.
\end{abstract}

\maketitle
\tableofcontents

\section{Introduction}

\subsection{Structured sequences and the square-root barrier}

Let $p$ be a prime.  If $(u_n)$ is an integer sequence and $I$ is a finite set of indices, write
\begin{equation}\label{eq:value-set-definition}
  \cV_p(u;I)=\{u_n\bmod p:n\in I\}\subseteq\F_p.
\end{equation}
The value sets considered in this paper are not polynomial value sets of bounded degree.  Their entries are generated by products, factorial ratios, or inhomogeneous recurrences whose arithmetic complexity grows with the index.  The useful structure is therefore not a fixed polynomial formula for $u_n$, but a low-complexity relation among nearby terms.

The basic obstruction is the square-root barrier.  If a set $A\subseteq\F_p^\times$ satisfies $A/A=\F_p^\times$, then the elementary inequality $|A|^2\ge p-1$ gives only $|A|\ge\sqrt{p-1}$.  Many naturally generated sets come with such a quotient or product identity, but an exponent larger than $1/2$ requires additional information.  The central theme here is that the needed extra information can be supplied by many \emph{internal transitions}: maps $T$ for which both an input and its output lie in the same value set.

A motivating example is the factorial-residue set
\[
  A_p=\{n!\bmod p:1\le n<p\}\subseteq\F_p^\times.
\]
Erd\H{o}s and Graham asked whether
\[
  |A_p|\sim(1-\mathrm e^{-1})p
  \qquad (p\to\infty);
\]
see \cite{ErdosGraham1980} and the formulation as Erd\H{o}s Problem~\#478 in \cite{Bloom478}.  The identity $n!/(n-1)!=n$, together with $0!=1=1!$, gives $A_p/A_p=\F_p^\times$ and hence the elementary square-root lower bound.  Grebennikov, Sagdeev, Semchankau and Vasilevskii improved its leading constant \cite{GrebennikovEtAl2024}; earlier work on the value set and average distribution appears in \cite{BanksEtAl2005,KlurmanMunsch2017}.  The exponent was subsequently improved to
\begin{equation}\label{eq:factorial-known}
  |A_p|\gg p^{8/15}
\end{equation}
by combining the identity
\begin{equation}\label{eq:factorial-identity-intro}
  (n+2)!=(n+1)!+\frac{((n+1)!)^2}{n!}
\end{equation}
with the Cartesian-product point-line incidence theorem of Stevens and de Zeeuw \cite{HuFactorials2026,StevensDeZeeuw2017}.  The important feature is not merely that \eqref{eq:factorial-identity-intro} is fractional-linear.  When two such transitions are paired at a common input, their relative transition is an affine map, and every nonidentity affine map has only boundedly many parameter representations.

The formally related self-power set
\[
  S_p=\{x^x\bmod p:1\le x<p\}
\]
illustrates the opposite situation.  Crocker proved a square-root-scale estimate \cite{Crocker1969}.  Bourgain and Shparlinski's work on consecutive modular roots supplied an important large-order method \cite{BourgainShparlinski2008}; Balog, Broughan and Shparlinski obtained uniform fibre and collision estimates \cite{BalogBroughanShparlinski2011}, and Cilleruelo and Garaev sharpened several individual-fibre bounds \cite{CillerueloGaraev2016}.  These results do not presently yield $|S_p|\gg p^{1/2+\delta}$ for a fixed $\delta>0$.  From the present viewpoint, the missing ingredient is a fixed-degree transition family whose quotients have low multiplicity and escape the proper subgroups of a small transformation group.

\subsection{Two transition-quotient mechanisms}

This paper isolates two regimes in which the preceding strategy closes.

In the first regime one has reciprocal-affine maps
\begin{equation}\label{eq:reciprocal-affine-intro}
  T_\theta(x)=b_\theta+\frac{c_\theta}{x},
  \qquad c_\theta\ne0.
\end{equation}
Writing $J(x)=1/x$ and $L_\theta(t)=c_\theta t+b_\theta$, one has $T_\theta=L_\theta\circ J$.  Consequently,
\[
  T_\eta\circ T_\theta^{-1}=L_\eta\circ L_\theta^{-1}\in\Aff(1,\F_p).
\]
After Cauchy--Schwarz, the second moment becomes an incidence count between $A\times A$ and affine lines.  A bounded quotient multiplicity then allows the Stevens--de Zeeuw theorem to produce the explicit exponent $8/15$.

In the second regime the consecutive ratios satisfy
\begin{equation}\label{eq:mobius-ratio-intro}
  \rho_{n+1}=\varphi(\rho_n),
  \qquad
  \varphi(t)=\frac{\alpha t+\beta}{\gamma t+\delta},
\end{equation}
with all four coefficients and the determinant nonzero.  If $a=u_{n+1}$ and $x=u_n$, then
\[
  u_{n+2}=a\varphi(a/x)=:T_a(x).
\]
The relative transformations $T_bT_a^{-1}$ lie in $\PSL_2(\F_p)$.  Their parameter map is injective off the diagonal, and Dickson's classification shows that the resulting set has only $O(|A|)$ elements in every proper subgroup coset.  Bourgain's modular Szemer\'edi--Trotter theorem for hyperbolas then yields a fixed, though non-explicit, power saving over the square-root exponent.

The two mechanisms may be summarized as follows.

\begin{center}
\small
\begin{tabular}{@{}lll@{}}
\toprule
Sequence or family & quotient geometry & lower bound \\
\midrule
factorials and arithmetic Pochhammer products & affine lines & $M^{8/15}$ \\
Gaussian $q$-factorials & affine lines & $M^{8/15}$ \\
derangements and ordered subsets & affine lines & $M^{8/15}$ \\
fixed binomial rows & subgroup-escaping $\PSL_2$ family & $M^{1/2+\eta}$ \\
Catalan and central binomial blocks & subgroup-escaping $\PSL_2$ family & $M^{1/2+\eta}$ \\
\bottomrule
\end{tabular}
\end{center}

Here $M$ denotes the number of distinct internal transitions available in the relevant block.

\subsection{Main results}

We first state the affine transition theorem.  Its quotient multiplicity will be defined precisely in \cref{sec:affine}.

\begin{theorem}\label{thm:affine-intro}
Let $A\subseteq\F_p$, let $\Theta$ be a finite parameter set with $|\Theta|\le |A|$, and let $(T_\theta)_{\theta\in\Theta}$ be pairwise distinct maps of the form \eqref{eq:reciprocal-affine-intro}.  Suppose that at least $M$ ordered pairs $(x,\theta)$ satisfy
\[
  x\in A\cap\F_p^\times,
  \qquad
  T_\theta(x)\in A.
\]
If every nonidentity quotient $T_\eta T_\theta^{-1}$ has at most $\mu$ parameter representations, where $\mu\ge1$, then
\begin{equation}\label{eq:affine-intro-bound}
  |A|\gg \min\{M,p\}^{8/15}\mu^{-4/15}.
\end{equation}
\end{theorem}

A particularly convenient consequence treats quadratic reciprocal transitions; see \cref{cor:quadratic-reciprocal}.  It leads to the following applications.

\begin{theorem}\label{thm:affine-applications-intro}
The following estimates hold.
\begin{enumerate}[label=(\roman*)]
  \item Let $r,d$ be fixed positive integers, and put
  \[
    U_n(r,d)=\prod_{j=0}^{n-1}(r+jd),
    \qquad U_0(r,d)=1.
  \]
  For all sufficiently large primes $p$,
  \[
    \left|\{U_n(r,d)\bmod p:0\le n,\ r+(n-1)d<p\}\right|
    \gg_d p^{8/15}.
  \]
  In particular this contains the factorial and double-factorial sequences.

  \item Let $g\in\F_p^\times$ have multiplicative order $L\ge2$, with $g\ne1$, and define
  \[
    [n]_g=\frac{1-g^n}{1-g},
    \qquad
    [n]_g!=\prod_{j=1}^n[j]_g.
  \]
  Then
  \begin{equation}\label{eq:qfactorial-intro}
    \left|\{[n]_g!:0\le n\le L-1\}\right|\gg L^{8/15}.
  \end{equation}
  In particular, if $g$ is primitive, the right-hand side is $\gg p^{8/15}$.

  \item If $D_n$ is the derangement sequence, then
  \begin{equation}\label{eq:derangement-intro}
    \left|\{D_0,D_1,\ldots,D_{p-1}\}\bmod p\right|\gg p^{8/15}.
  \end{equation}

  \item Let
  \[
    R_n=\sum_{k=0}^n\frac{n!}{(n-k)!},
  \]
  the number of ordered subsets of an $n$-element set.  Then
  \begin{equation}\label{eq:arrangement-intro}
    \left|\{R_0,R_1,\ldots,R_{p-1}\}\bmod p\right|\gg p^{8/15}.
  \end{equation}
\end{enumerate}
All implied constants in (ii)--(iv) are absolute.
\end{theorem}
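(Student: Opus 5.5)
Each part is reduced to \cref{thm:affine-intro} by finding, for the relevant sequence, a reciprocal-affine identity
\[
  u_{n+2}=T_{\theta_n}(u_n)\quad\text{or}\quad u_{n+2}=b_n+\frac{c_n}{u_n}
\]
with a parameter $\theta_n$ that takes $\gg M$ distinct values, so that $M\asymp p$ (or $L$) internal transitions are available. The relevant parameters are $b,c$, which may involve the middle term $u_{n+1}$, as in the factorial identity \eqref{eq:factorial-identity-intro}. I would take $A$ to be the value set itself, and $\Theta$ to be the set of pairs $(b_\theta,c_\theta)$ actually used. If some $c_\theta$ vanishes or $u_n=0$, that index is simply discarded. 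The constraint $|\Theta|\le|A|$ is handled by noting that each parameter is determined by an element of $A$ (for instance $a=u_{n+1}$), so there are at most $|A|$ distinct parameters. Multiple indices sharing the same map only increase the incidence count, but I must count the pairs $(x,\theta)$, not indices. Thus I need that a fixed $\theta$ and $x$ arise from boundedly many $n$, or else directly that $\gg M$ distinct pairs occur.

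The identities are as follows.

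(i) Put $x=U_n$ and $a=U_{n+1}=x(r+nd)$. Then $U_{n+2}=a(r+(n+1)d)=a(r+nd)+da=a^2/x+da$, which is quadratic reciprocal: $T_a(x)=da+a^2/x$.

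(ii) For the $q$-factorials, $[n+1]_g=1+g[n]_g$. Hence $[n+2]_g!=a[n+2]_g=a(1+g[n+1]_g)=a+g a^2/x$.

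(iii) For derangements, use $D_{n+1}=(n+1)D_n+(-1)^{n+1}$. Taking $x=D_n$ and $a=D_{n+1}$, we get $n+1=(a\mp1)/x$, and therefore $D_{n+2}=(n+2)a\pm1=a(a\mp1)/x+a\pm1$.

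(iv) For ordered subsets, $R_{n+1}=(n+1)R_n+1$ gives the same structure.

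In every case $T_a(x)=\beta(a)+\gamma(a)/x$ with $\gamma(a)$ a nonzero quadratic in $a$; this is exactly the setting of \cref{cor:quadratic-reciprocal}. I would invoke that corollary, and hence \cref{thm:affine-intro}, with $\mu=O(1)$. The bound on $\mu$ comes from the fact that $T_bT_a^{-1}(t)=\beta(b)+\gamma(b)(t-\beta(a))/\gamma(a)$. Prescribing this affine map fixes $\gamma(b)/\gamma(a)$ and $\beta(b)-\gamma(b)\beta(a)/\gamma(a)$, two polynomial equations in $(a,b)$. For a nonidentity map these cut out a finite set, of size bounded by B\'ezout provided the curves share no component.

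Counting transitions is where care is needed. In (i) the $a=U_{n+1}$ need not be distinct across $n$, so I would count pairs $(U_n,a)$. A coincidence $(U_n,U_{n+1})=(U_m,U_{m+1})$ forces $r+nd=r+md$ mod $p$, so $n=m$ within the range. Hence $M\gg p/d$ distinct pairs, and the affine pairs $(\beta,\gamma)=(da,a^2)$ determine $a$, giving distinct maps. The same ratio argument works in (ii), since $[n+1]_g$ for $0\le n\le L-2$ are distinct, because $g^n$ are distinct. In (iii) and (iv) the multiplier $n+1$ is recovered from the pair of consecutive values, so coincidences again force $n\equiv m$. The ranges give $M\gg p$ or $M\gg L$, after excluding $O(1)$ indices where a value or $\gamma(a)$ vanishes. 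For (iii) this also requires checking that the sign alternation causes no collision; splitting by parity of $n$ costs only a factor $2$.

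The main obstacle I expect is verifying the bounded quotient multiplicity uniformly, especially excluding degenerate families where $\gamma(b)/\gamma(a)=\lambda$ and the translation equation share a common component. For $\gamma(a)=ca^2$ and $\beta$ linear, the first equation gives $b=\pm\sqrt\lambda\,a$, and substituting into the second yields a nontrivial linear equation in $a$ unless the map is the identity. This gives $\mu\le2$ in (i) and (ii). Parts (iii) and (iv) have $\gamma(a)=a(a\mp1)$, where the same elimination gives a bounded number of solutions after an explicit resultant check. That check should be packaged in \cref{cor:quadratic-reciprocal}, whose hypotheses I would verify rather than redo.
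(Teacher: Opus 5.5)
Your reduction follows the paper's route. It uses the same quadratic reciprocal transitions $da+a^2/x$, $a+ga^2/x$, $(a-\varepsilon)+a(a-\varepsilon)/x$ and $(a+1)+a(a-1)/x$, fed into \cref{cor:quadratic-reciprocal}. Distinct pairs $(u_n,u_{n+1})$ come from recovering the ratio or the multiplier $n+1$, and derangements are split by parity. Parts (i) and (ii) are fine as written, because every term there is nonzero. There is one sign slip: with $\varepsilon=(-1)^{n+1}$ one gets $D_{n+2}=(n+2)a-\varepsilon=(a-\varepsilon)+a(a-\varepsilon)/x$, so the additive term is $a\mp1$, not $a\pm1$. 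This does not change the structure.

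The genuine gap is in (iii) and (iv). You claim $M\gg p$ survives ``after excluding $O(1)$ indices where a value or $\gamma(a)$ vanishes.'' You give no argument that the number of $n<p$ with $p\mid D_n$ (or $p\mid R_n$) is $O(1)$, and there is no reason to expect it. Discarding indices is also not harmless without some control on them. A transition at index $n$ needs $u_n\ne0$ and $u_{n+1}\ne0$; these are exactly what make $\gamma(a)=a(a\mp1)=u_{n+1}\cdot(n+1)u_n$ nonzero. If, say, every other term vanished, no admissible pair would remain.

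The missing idea is that the recurrences force zeros to be isolated:
\begin{itemize}
\item If $D_j\equiv0$, then $D_{j+1}=(-1)^{j+1}$ and $D_{j+2}=(j+1)(-1)^{j+1}$ are both nonzero for $j\le p-3$.
\item If $R_j\equiv0$, then $R_{j+1}=1$ and $R_{j+2}=j+3$ are both nonzero in the relevant range.
\end{itemize}
So zero indices are at least three apart, and there are at most about $p/3$ of them. Each zero spoils at most two adjacent pairs, leaving at least $(p-O(1))/3$ indices $n\le p-3$ with $u_nu_{n+1}\ne0$; this is \cref{lem:zero-spacing}. Only after this does the parity split give $M\gg p$ and the appeal to \cref{cor:quadratic-reciprocal} go through.
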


We next state the M\"obius transition theorem.  Let
\begin{equation}\label{eq:phi-intro}
  \varphi(t)=\frac{\alpha t+\beta}{\gamma t+\delta},
  \qquad
  \alpha,\beta,\gamma,\delta\in\F_p^\times,
  \qquad
  \Delta:=\alpha\delta-\beta\gamma\ne0,
\end{equation}
and define
\begin{equation}\label{eq:T-intro}
  T_a(x)=a\varphi(a/x)
  =a\frac{\alpha a+\beta x}{\gamma a+\delta x}.
\end{equation}
For $V\subseteq\F_p^\times$, put
\begin{equation}\label{eq:transition-count-intro}
  \cN_T(V)
  =\#\{(x,a)\in V^2:\gamma a+\delta x\ne0,\ T_a(x)\in V\}.
\end{equation}

\begin{theorem}\label{thm:abstract-intro}
There are absolute constants $c,\eta>0$ such that, for every prime $p$, every quadruple satisfying \eqref{eq:phi-intro}, and every $V\subseteq\F_p^\times$,
\begin{equation}\label{eq:abstract-intro}
  |V|\ge c\min\{\cN_T(V),p\}^{1/2+\eta}.
\end{equation}
\end{theorem}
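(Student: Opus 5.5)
We will prove \cref{thm:abstract-intro} with a transition-quotient argument, using Bourgain's incidence theorem in $\SL_2(\F_p)$ in place of Stevens--de Zeeuw.

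\textbf{Step 1: reduce to an energy estimate.} Each $T_a$ is a M\"obius map in $x$. Writing $T_a(x)=(\alpha a^2+\beta a x)/(\gamma a+\delta x)$, its matrix is
\[
g_a=\begin{pmatrix}\beta a & \alpha a^2\\ \delta & \gamma a\end{pmatrix},
\qquad \det g_a=a^2(\beta\gamma-\alpha\delta)=-a^2\Delta\neq0 .
\]
Rescale $g_a$ to determinant one, passing to $\PSL_2$ over $\F_{p^2}$ if necessary, or work in $\mathrm{PGL}_2(\F_p)$ and restrict to an index-$2$ subgroup. Let $N=\min\{\cN_T(V),p\}$ and discard transitions so that exactly $N$ admissible pairs $(x,a)$ remain. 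These pairs are incidences $g_a x\in V$ between $V$ and the set $G=\{g_a:a\in V\}\subseteq\PSL_2$. By Cauchy--Schwarz over the target $y=g_a x$,
\[
N^2\le |V|\cdot\#\{(x,x',a,b):g_ax=g_bx'\in V\}.
\]
The diagonal $a=b$ contributes at most $N\le|V|^2$. Off the diagonal, $x'=g_b^{-1}g_a x$. So we need to count incidences between the point set $V\times V$ and the graphs of the quotients $h_{a,b}=g_b^{-1}g_a$ for $a\neq b$. These graphs are hyperbolas, since the quotients are non-affine M\"obius maps.

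\textbf{Step 2: structure of the quotient family.} We show three things about $(a,b)\mapsto h_{a,b}$ in $\PSL_2$.
\begin{enumerate}[label=(\alph*)]
\item It is injective off the diagonal, up to $O(1)$ multiplicity. To prove this, compute $h_{a,b}$ explicitly and recover $a$ and $b$ from ratios of its entries. Here the hypothesis that all four coefficients $\alpha,\beta,\gamma,\delta$ are nonzero is used.
\item The resulting set $H$ has size about $|V|^2$.
\item $H$ has only $O(|V|)$ elements in any coset of a proper subgroup of $\PSL_2(\F_p)$.
\end{enumerate}
For (c) we use Dickson's classification. A proper subgroup is contained in a Borel subgroup, a dihedral normaliser of a torus, or a group of bounded order ($A_4$, $S_4$, $A_5$). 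The bounded-order case is trivial. For a Borel or torus normaliser, membership of $h_{a,b}$ in a fixed coset is a polynomial condition of bounded degree in $(a,b)$. Fixing $a$ leaves $O(1)$ admissible $b$, unless the condition vanishes identically. We exclude that degenerate case by checking the entries of $h_{a,b}$ directly.

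\textbf{Step 3: apply Bourgain's theorem.} Bourgain's modular Szemer\'edi--Trotter theorem for hyperbolas (the form proved via expansion in $\SL_2(\F_p)$) states the following. If $H\subseteq\SL_2(\F_p)$ has at most $K$ elements in every proper-subgroup coset, and $P=V\times V$ with $|V|<p^{1-\epsilon}$, then
\[
I(P,H)\ll |V|^{3/2-\eta_0}|H|^{1/2}
\]
(or the analogous bound with a power saving), provided $K\le|H|^{1/2+o(1)}$. With $|H|\approx|V|^2$ and $K=O(|V|)$, this gives an off-diagonal count of at most $|V|^{3-\eta_1}$. Inserting this into Step 1 yields $N^2\ll |V|^{4-\eta_1}$, hence $|V|\gg N^{1/2+\eta}$.

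If instead $|V|>p^{1-\epsilon}$, we have $|V|\ge p^{1-\epsilon}\ge N^{1-\epsilon}$, which is stronger than needed. This explains why the bound is stated with $\min\{\cN_T(V),p\}$.

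\textbf{Main obstacle.} The hardest part is Step 2(c), together with matching its output to the precise hypotheses of Bourgain's theorem. We need to rule out that a large piece of $\{h_{a,b}\}$ clusters in a Borel coset, for example when the quotients share a fixed point. The approach is to compute the fixed-point equation of $h_{a,b}$ and show that a common fixed point forces $\beta\gamma$-type degeneracies, which nonvanishing of the coefficients excludes. If Bourgain's theorem requires non-concentration in all cosets at a multiscale level, we will use the $O(|V|)$ bound as the uniform input and pay only a constant.
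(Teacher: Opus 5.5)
This is the paper's argument. Your Cauchy--Schwarz over targets yields the quotients $T_b^{-1}T_a$ instead of the paper's $T_bT_a^{-1}$, and that is harmless: $T_a^{-1}(y)=a(\alpha a-\gamma y)/(-\beta a+\delta y)$ is a transition of the same shape with nondegenerate quadruple $(\alpha,-\gamma,-\beta,\delta)$, so the paper's injectivity lemma and coset count apply verbatim, provided you normalize only the quotients $\frac ba g_b^{-1}g_a\in\SL_2(\F_p)$ (determinant $a^2/b^2$ before scaling) rather than passing to $\F_{p^2}$. In the coset count, for fixed $a$ the Borel-coset condition $T_b(\eta)=T_a(\xi)$ has at most two solutions $b$, since in the finite case it is a quadratic with leading coefficient $\alpha\neq0$, so it never vanishes identically.

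The one real correction is in Step~3: the displayed bound $|V|^{3/2-\eta_0}|H|^{1/2}$ is not Bourgain's theorem and is false in general. A random $H$ of size $|V|^2$ has about $|V|^4/p$ incidences, which exceeds $|V|^{5/2}$ once $|V|>p^{2/3}$. The actual statement is $I(V,S)\ll|V|^{1-\sigma}|S|$ under $|S\cap gK|<|S|^{1-\varepsilon}$ and $\log|V|<r\log|S|$, and it gives exactly the $|V|^{3-\eta_1}$ you go on to use.
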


The exponent $\eta$ comes from expansion in $\SL_2(\F_p)$ and is not made explicit.  The applications below are all local: they concern blocks of length at most $p$.

\begin{theorem}\label{thm:binomial-intro}
There are absolute constants $c,\eta>0$ such that, for every prime $p$ and every integer $N$ with $1\le N\le p-3$,
\begin{equation}\label{eq:binomial-intro}
  \left|\left\{\binom Nn\bmod p:0\le n\le N\right\}\right|
  \ge c(N+1)^{1/2+\eta}.
\end{equation}
For the two remaining rows below $p$,
\begin{align}
  \left|\left\{\binom{p-2}{n}\bmod p:0\le n\le p-2\right\}\right|
  &=\frac{p-1}{2},\label{eq:pminus2-intro}\\
  \left|\left\{\binom{p-1}{n}\bmod p:0\le n\le p-1\right\}\right|
  &=2.\label{eq:pminus1-intro}
\end{align}
\end{theorem}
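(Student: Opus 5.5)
We apply \cref{thm:abstract-intro} to the value set of the row $u_n=\binom Nn$, $0\le n\le N$, with $p\nmid u_n$ because $N<p$. The consecutive ratios are
\[
  \rho_n=\frac{u_n}{u_{n-1}}=\frac{N-n+1}{n},\qquad 1\le n\le N .
\]
The variable $n$ is recovered from $\rho_n$ by $n=(N+1)/(\rho_n+1)$, so $\rho_{n+1}=\varphi(\rho_n)$ for a fixed M\"obius map. Computing directly,
\[
  \rho_{n+1}=\frac{N-n}{n+1}=\frac{(N+1)-(n+1)}{n+1}=\frac{N+1}{n+1}-1 .
\]
Substituting $n+1=\frac{(N+1)+(\rho_n+1)}{\rho_n+1}=\frac{\rho_n+N+2}{\rho_n+1}$ gives
\[
  \rho_{n+1}=\frac{(N+1)(\rho_n+1)}{\rho_n+N+2}-1=\frac{N\rho_n-1}{\rho_n+N+2}.
\]
The coefficients are therefore $(\alpha,\beta,\gamma,\delta)=(N,-1,1,N+2)$. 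They are all nonzero mod $p$ precisely when $N\not\equiv 0,-2$, which holds for $1\le N\le p-3$. The determinant is
\[
  \Delta=N(N+2)+1=(N+1)^2,
\]
which is nonzero since $N+1<p$. This is exactly where the restriction $N\le p-3$ enters: the case $N=p-2$ makes $\delta=0$.

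Next we count internal transitions. Take $V$ to be the value set. For each $n$ with $1\le n\le N-1$, the pair $(x,a)=(u_{n-1},u_n)$ satisfies $T_a(x)=a\varphi(a/x)=u_n\rho_{n+1}=u_{n+1}\in V$. We also have
\[
  \gamma a+\delta x=x(\rho_n+N+2)=x\cdot\frac{N+1}{n}\cdot(n+1)\ne0 .
\]
The definition of $\cN_T(V)$ counts distinct pairs $(x,a)$, so we must bound how many indices $n$ produce the same pair. Given $(x,a)$, the ratio $a/x=\rho_n$ determines $n$ uniquely, because $\rho\mapsto (N+1)/(\rho+1)$ is injective and $n$ ranges over $1,\dots,N<p$. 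Hence the pairs are pairwise distinct, and
\[
  \cN_T(V)\ge N-1 .
\]
\cref{thm:abstract-intro} now gives $|V|\ge c(N-1)^{1/2+\eta}$. Shrinking $c$ handles small $N$, using the trivial bound $|V|\ge1$ (or $\ge2$), and converts $N-1$ into $N+1$. This part is routine; the only point requiring care is the injectivity of $n\mapsto\rho_n$, and that is immediate.

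For $N=p-2$ we use $\binom{p-2}{n}\equiv(-1)^n(n+1)\pmod p$, which follows from $\binom{p-2}{n}=\prod_{j=1}^n\frac{p-1-j}{j}$. The values are $\pm(n+1)$ for $n+1\in\{1,\dots,p-1\}$, with sign $(-1)^n$. The value set is closed under $y\mapsto -y$ in the following sense: $\epsilon(k)k=\epsilon(k')k'$ with $k\ne k'$ forces $k'=p-k$. Since $p$ is odd, $k$ and $p-k$ have opposite parity, so their signs are opposite, and then $-k=-(p-k)$ would force $k=p-k$, which is impossible. Reworking the sign bookkeeping carefully: the value attached to $k$ is $(-1)^{k-1}k$, and the value attached to $p-k$ is $(-1)^{p-k-1}(p-k)=(-1)^{k}(-k)=(-1)^{k-1}k$. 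So $k$ and $p-k$ give the same value. The map $k\mapsto (-1)^{k-1}k$ is therefore exactly two-to-one on $\{1,\dots,p-1\}$, because no $k$ equals $p-k$. This yields $(p-1)/2$ distinct values.

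For $N=p-1$ we have $\binom{p-1}{n}\equiv(-1)^n$, so the value set is $\{1,-1\}$ and has exactly $2$ elements. The main obstacle in the whole argument is only verifying the nondegeneracy of $\varphi$, which is entirely explicit.
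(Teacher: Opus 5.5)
Your proposal is correct and follows the paper's proof essentially verbatim. You use the same M\"obius recurrence with $(\alpha,\beta,\gamma,\delta)=(N,-1,1,N+2)$ and $\Delta=(N+1)^2$, and the same $N-1$ distinct adjacent pairs fed into \cref{thm:abstract-intro}; the paper routes this through \cref{cor:ratio-dynamics}, which is the same step. For the rows $p-2$ and $p-1$ you use the same explicit formulas $(-1)^n(n+1)$ and $(-1)^n$. The one fix is cosmetic: delete your first sign argument for $N=p-2$, which wrongly suggests that $k$ and $p-k$ give distinct values; only your reworked computation, showing the fibres are exactly $\{k,p-k\}$, is valid.
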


\begin{theorem}\label{thm:catalan-intro}
There are absolute constants $c,\eta>0$ such that, for every prime $p\ge7$,
\begin{align}
  \left|\left\{\frac1{n+1}\binom{2n}{n}\bmod p:
  0\le n\le\frac{p-1}{2}\right\}\right|
  &\ge cp^{1/2+\eta},\label{eq:catalan-intro}\\
  \left|\left\{\binom{2n}{n}\bmod p:
  0\le n\le\frac{p-1}{2}\right\}\right|
  &\ge cp^{1/2+\eta}.\label{eq:central-intro}
\end{align}
\end{theorem}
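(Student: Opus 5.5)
The plan is to deduce both estimates directly from \cref{thm:abstract-intro}. For each sequence I will check three things: the consecutive ratios obey a nondegenerate M\"obius recursion of the form \eqref{eq:phi-intro}; all terms in the block are nonzero modulo $p$; and the consecutive pairs $(u_n,u_{n+1})$ give $\gg p$ distinct transitions counted by $\cN_T(V)$.

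\emph{Step 1: the M\"obius recursion.} For the Catalan numbers $C_n=\frac1{n+1}\binom{2n}n$ one has $\rho_n:=C_{n+1}/C_n=(4n+2)/(n+2)$.
\begin{itemize}
\item Solving for $n$ gives $n=(2-2\rho_n)/(\rho_n-4)$.
\item Substituting into $\rho_{n+1}=(4n+6)/(n+3)$ gives
\[
  \rho_{n+1}=\frac{2\rho_n+16}{-\rho_n+10},
  \qquad (\alpha,\beta,\gamma,\delta)=(2,16,-1,10),
  \qquad \Delta=36 .
\]
\end{itemize}
For $B_n=\binom{2n}n$ one has $\rho_n=(4n+2)/(n+1)$ and $n=(2-\rho_n)/(\rho_n-4)$. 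The same substitution gives
\[
  \rho_{n+1}=\frac{2\rho_n-16}{\rho_n-6},
  \qquad \Delta=4 .
\]
For $p\ge7$ all coefficients and both determinants are nonzero in $\F_p$, so \eqref{eq:phi-intro} holds.

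These identities are obtained by clearing denominators, so I must check that no denominator vanishes for indices in the block.
\begin{itemize}
\item $\rho_n=4$ would force $6=0$ (Catalan) or $2=0$ (central binomial), which is impossible.
\item The factors $n+1,n+2,n+3\le (p+5)/2$ are nonzero modulo $p$ for $p\ge7$.
\end{itemize}
In the range actually used, $n\le(p-5)/2$, so these factors are even at most $(p+1)/2<p$.

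\emph{Step 2: nonvanishing and transitions.} For $0\le n\le (p-1)/2$ we have $2n\le p-1$, so $\binom{2n}n\not\equiv0\pmod p$. Also $n+1<p$, so $C_n\not\equiv0$. Hence the value set $V$ lies in $\F_p^\times$.

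Put $x=u_n$ and $a=u_{n+1}$. Then $a/x=\rho_n$ and $T_a(x)=a\varphi(\rho_n)=a\rho_{n+1}=u_{n+2}\in V$ whenever $n+2\le(p-1)/2$.

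The side condition $\gamma a+\delta x\ne0$ equals $x(\gamma\rho_n+\delta)\ne0$. If $\gamma\rho_n+\delta=0$, then the cleared identity $\rho_{n+1}(\gamma\rho_n+\delta)=\alpha\rho_n+\beta$ would force $\alpha\rho_n+\beta=0$ as well, contradicting $\Delta\ne0$.

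\emph{Step 3: counting distinct pairs.} $\cN_T(V)$ counts pairs in $V^2$ rather than indices, so I need the pairs $(u_n,u_{n+1})$ for $0\le n\le(p-5)/2$ to be distinct. If two indices gave the same pair, they would give the same ratio $\rho_n$. But $n\mapsto\rho_n$ is an invertible M\"obius map (its inverse was written in Step 1), and distinct $n<p$ are distinct residues, so the ratios are distinct. Hence $\cN_T(V)\ge (p-3)/2\ge p/7$ for $p\ge7$. \Cref{thm:abstract-intro} then gives $|V|\ge c'(p/7)^{1/2+\eta}\ge cp^{1/2+\eta}$ after shrinking the constant.

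The only genuinely delicate point is Step 1. One must verify that the rational identity for $\rho_{n+1}$ holds pointwise in $\F_p$ throughout the block, with no pole or zero denominator, and that all four coefficients are nonzero for every $p\ge7$. These are finite checks, carried out as above.
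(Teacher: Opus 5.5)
Your proposal is correct and is essentially the paper's proof. The paper also uses the M\"obius recursions with coefficients $(2,16,-1,10)$ and $(2,-16,1,-6)$, and the injectivity of $n\mapsto\rho_n$ (determinants $6$ and $2$) to obtain $(p-3)/2$ distinct adjacent pairs. It then applies \cref{cor:ratio-dynamics}, which is just your inlined application of \cref{thm:abstract-intro}, while your explicit checks of the denominators and of the side condition $\gamma a+\delta x\ne0$ supply details the paper leaves implicit.
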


\subsection{Background for the applications}

Arithmetic Pochhammer products and Gaussian $q$-factorials are classical hypergeometric objects; see, for example, \cite{GasperRahman2004}.  The ordinary factorial is the arithmetic progression case $r=d=1$, while $[n]_g!$ is the finite-field specialization of a $q$-shifted factorial after the standard normalization by $(1-g)^n$.

Derangements are among the most classical permutation sequences.  Their congruences, periodicity, valuations and prime divisors have been studied in \cite{SunZagier2011,Miska2016}; standard combinatorial background can be found in \cite{Stanley2012}.  The ordered-subset numbers $R_n$ count injective words of all possible lengths on an $n$-element alphabet.  Their simple inhomogeneous recurrence makes them a natural companion to derangements in the reciprocal-quadratic framework.

Binomial coefficients modulo primes have a long arithmetic and digital theory.  The distribution of entries in Pascal's triangle was studied, among other places, by Garfield and Wilf \cite{GarfieldWilf1992}, Barbolosi and Grabner \cite{BarbolosiGrabner1996}, and Barat and Grabner \cite{BaratGrabner2001}.  For a fixed row, Mattarei investigated linear recurrences modulo a prime \cite{Mattarei2008}.  The statistic considered here is the number of distinct residues within a single row $N<p$.

Catalan numbers and central binomial coefficients have been studied through congruences, character sums, solution counts and automata.  Garaev, Luca and Shparlinski proved character-sum estimates and showed that sufficiently long initial segments cover all residue classes \cite{GaraevLucaShparlinski2006,GaraevLucaShparlinski2007}.  Burns proved that the full Catalan sequence assumes every residue modulo every prime $p\ge5$, indeed infinitely often \cite{Burns2017}.  The blocks in \cref{thm:catalan-intro} have length only about $p/2$, so eventual surjectivity does not directly address them.

The incidence-theoretic background also has two branches.  Sum-product methods over prime fields, beginning with Bourgain, Katz and Tao \cite{BourgainKatzTao2004}, led to strong incidence bounds and to Helfgott's growth theorem in $\SL_2(\F_p)$ \cite{Helfgott2008}.  Bourgain and Gamburd established uniform expansion for Cayley graphs of $\SL_2(\F_p)$ \cite{BourgainGamburd2008}.  Stevens and de Zeeuw proved the Cartesian-product point-line estimate used in the affine branch \cite{StevensDeZeeuw2017}.  Bourgain's modular Szemer\'edi--Trotter theorem treats M\"obius hyperbolae \cite{Bourgain2012}; later developments include \cite{Shkredov2021,RudnevWheeler2022,WarrenWheeler2023,JingZou2024}.

\subsection{Organization of the paper}

\Cref{sec:pointline} records the Cartesian-product incidence theorem.  \Cref{sec:affine} proves the reciprocal-affine transition theorem and gives two easy structural criteria for bounded quotient multiplicity.  The applications to Pochhammer products, $q$-factorials, derangements and ordered subsets appear in \cref{sec:affine-applications}.  The second half begins with Bourgain's incidence theorem in \cref{sec:bourgain}.  The normalized $\SL_2$ quotient matrices and their injectivity are established in \cref{sec:transitions}; subgroup escape is proved in \cref{sec:escape}; and \cref{sec:abstract-proof} proves \cref{thm:abstract-intro}.  M\"obius ratio dynamics and the binomial, Catalan and central-binomial applications are treated in \cref{sec:ratio,sec:binomial,sec:catalan}.

Throughout, all implied constants are absolute unless a dependence is indicated.

\section{A Cartesian-product point-line theorem}\label{sec:pointline}

For a point set $\cP\subseteq\F_p^2$ and a finite set $\cL$ of affine lines, write
\[
  I(\cP,\cL)
  =\#\{(q,\ell)\in\cP\times\cL:q\in\ell\}.
\]
We use the following form of the Cartesian-product incidence theorem of Stevens and de Zeeuw \cite[Theorem~4]{StevensDeZeeuw2017}.

\begin{theorem}[Stevens--de Zeeuw]\label{thm:cartesian-incidence}
There are absolute constants $c_0,C_0>0$ with the following property.  Let $X,Y\subseteq\F_p$, with $|X|=x\le y=|Y|$, and let $\cL$ be a set of $N$ distinct affine lines.  Suppose that
\[
  xy^2\le N^3
  \qquad\text{and}\qquad
  xN\le c_0p^2.
\]
Then
\begin{equation}\label{eq:cartesian-incidence}
  I(X\times Y,\cL)
  \le C_0\bigl(x^{3/4}y^{1/2}N^{3/4}+N\bigr).
\end{equation}
\end{theorem}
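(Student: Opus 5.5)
This result is quoted from \cite[Theorem~4]{StevensDeZeeuw2017} and is used here as a black box. If I had to reprove it, I would reduce it to Rudnev's point--plane incidence theorem in $\F_p^3$ by a single Cauchy--Schwarz step.

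\emph{Vertical lines.} A vertical line $\{a\}\times\F_p$ meets $X\times Y$ only if $a\in X$. So at most $x$ vertical lines are relevant, and they contribute at most $xy$ incidences. The hypothesis $xy^2\le N^3$ gives $x^{1/4}y^{1/2}\le N^{3/4}$, hence
\[
xy\le x^{3/4}y^{1/2}N^{3/4},
\]
so this contribution is absorbed into \eqref{eq:cartesian-incidence}.

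\emph{Non-vertical lines.} The remaining lines have the form $\ell_{m,c}:\ t\mapsto mt+c$. I would group incidences by their $y$-coordinate:
\[
I=\sum_{b\in Y}\#\{(a,\ell)\in X\times\cL:\ell(a)=b\}.
\]
Cauchy--Schwarz over $b$ then gives
\[
I^2\le y\cdot E,\qquad E=\#\{(a,\ell,a',\ell')\in(X\times\cL)^2:\ell(a)=\ell'(a')\}.
\]
The defining equation of $E$ reads $ma-m'a'+c-c'=0$. For a fixed tuple $(a',m,c)$ this is linear in $(a,m',c')$. So $E$ is the number of incidences between the $xN$ points $(a,m',c')$ and the $xN$ planes $\{ma-a'm'-c'+c=0\}$ indexed by $(a',\ell)\in X\times\cL$. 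These planes are pairwise distinct, since $(m,c,a')$ is read off from the plane's normal vector and constant term.

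\emph{Applying Rudnev.} Rudnev's theorem applies once $xN\ll p^2$, which is exactly the hypothesis $xN\le c_0p^2$. It gives
\[
E\ll (xN)^{3/2}+k\,xN,
\]
where $k$ is the maximal number of the points that are collinear. The main term yields
\[
I\ll y^{1/2}(xN)^{3/4}=x^{3/4}y^{1/2}N^{3/4},
\]
which is the first term of \eqref{eq:cartesian-incidence}.

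\emph{Main obstacle: the collinear term.} The step I expect to be hardest is controlling the $k\,xN$ term. The point set is $X\times\{(m,c):\ell_{m,c}\in\cL\}$, and a line in $\F_p^3$ could carry many such points. I would split lines by their projection to the first coordinate:
\begin{itemize}
\item If the first coordinate varies along the line, the line contains at most $x$ points, so $k\le x$ for these lines.
\item If the first coordinate is constant along the line, the line lies in a fibre $\{a\}\times\F_p^2$, and its points correspond to lines of $\cL$ lying in a pencil of the dual plane.
\end{itemize}
For the second case I would remove rich pencils beforehand. Lines through a common point contribute at most $N+xy$ incidences in total, and those are absorbed by the $N$ term and by the same inequality $xy^2\le N^3$ used for vertical lines. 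The resulting bound $k\,xN\ll x^2N$, together with the pencil contribution, should then be dominated by $(xN)^{3/2}$ plus terms giving the additive $N$.

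I am least sure of the bookkeeping in this last step, namely checking that $x^2N\ll(xN)^{3/2}$ and that the pencil pruning costs only $O(N)$ under $x\le y$ and $xy^2\le N^3$. That is where I would follow Stevens--de Zeeuw's original argument closely.
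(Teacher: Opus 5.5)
Citing Stevens--de Zeeuw as a black box is exactly what the paper does. Your sketched reproof follows their route: Cauchy--Schwarz over $b\in Y$, then $xN$ points against $xN$ planes in $\F_p^3$, then Rudnev's theorem. The vertical-line step is correct. So is the check $x^2N\le(xN)^{3/2}$, since $x^3\le xy^2\le N^3$ gives $x\le N$.

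\textbf{The gap: the collinearity step.} As you set it up, this step fails.
\begin{itemize}
\item The bound $N+xy$ holds for one pencil only. A pencil of $s$ lines loses at most $s+xy$ incidences, so pruning $R$ pencils loses up to $N+R\,xy$.
\item To force $k\le x$ you must prune every pencil with more than $x$ lines. The only available bound is then $R<N/x$, so the loss is bounded only by $N+Ny$.
\item This is not $O(N)$. Absorbing it into the main term, $Ny\le x^{3/4}y^{1/2}N^{3/4}$, would require $Ny^2\le x^3$. Under $x\le y$ and $x\le N$ this holds only when $x=y=N$.
\end{itemize}
You also omitted parallel classes. A dual line $\{m=m_0\}$ consists of parallel lines of $\cL$, horizontal lines included, so such classes must be pruned too. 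Each one loses at most $xy$ incidences.

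\textbf{The repair.} Prune at threshold $k_0=(xN)^{1/2}$ instead of $x$.
\begin{itemize}
\item Iteratively remove every pencil or parallel class with more than $k_0$ remaining lines. Rudnev's theorem then applies with $k\le\max\{x,k_0\}=k_0$, again because $x\le N$. Hence $k\,xN\le(xN)^{3/2}$, and the collinear term is harmless.
\item At most $N/k_0$ classes are removed, so the loss is at most
\[
N+(N/k_0)\,xy=N+x^{1/2}N^{1/2}y.
\]
Moreover $x^{1/2}N^{1/2}y\le x^{3/4}y^{1/2}N^{3/4}$ holds precisely when $y^2\le xN$.
\item In the complementary range $y^2>xN$ no incidence theorem is needed. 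A nonvertical line meets $X\times Y$ in at most $x$ points, so $I\le xN+xy$, and $xN<x^{3/4}y^{1/2}N^{3/4}$ in this range.
\end{itemize}
With this threshold and case split, your argument gives the stated bound. The additive $N$ comes from the incidences at the centres of the pruned pencils.
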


The application below always has $X=Y=A$ and, after padding the line set if necessary, $N=|A|^2$.  The characteristic condition is then $|A|^3\ll p^2$, and \eqref{eq:cartesian-incidence} becomes
\begin{equation}\label{eq:cartesian-special}
  I(A\times A,\cL)\ll |A|^{11/4}.
\end{equation}
If the characteristic condition fails, the set already has size $\gg p^{2/3}$, which is stronger than every affine-branch conclusion needed here.

\section{Reciprocal-affine transitions}\label{sec:affine}

Let $A\subseteq\F_p$, put $m=|A|$, and let $\Theta$ be a finite parameter set.  For each $\theta\in\Theta$, fix $b_\theta\in\F_p$ and $c_\theta\in\F_p^\times$, and define
\begin{equation}\label{eq:affine-T}
  T_\theta(x)=b_\theta+\frac{c_\theta}{x}
  \qquad (x\in\F_p^\times).
\end{equation}
We assume that the maps $T_\theta$ are pairwise distinct.  Their inverses are
\[
  T_\theta^{-1}(u)=\frac{c_\theta}{u-b_\theta},
\]
and hence
\begin{equation}\label{eq:affine-quotient}
  T_\eta\bigl(T_\theta^{-1}(u)\bigr)
  =\frac{c_\eta}{c_\theta}u
   +b_\eta-\frac{c_\eta}{c_\theta}b_\theta.
\end{equation}
Thus every relative transition is an affine line.

Define the nonidentity quotient multiplicity by
\begin{equation}\label{eq:affine-multiplicity}
  \mu(\Theta)
  =\max\left\{1,
  \max_{\ell\ne\id}
  \#\{(\theta,\eta)\in\Theta^2:
  T_\eta T_\theta^{-1}=\ell\}\right\}.
\end{equation}
Pairwise distinctness ensures that the identity quotient occurs only when $\theta=\eta$.

For a set $\cE\subseteq(A\cap\F_p^\times)\times\Theta$, call $(x,\theta)\in\cE$ an internal transition if $T_\theta(x)\in A$.

\begin{theorem}\label{thm:affine}
Assume $|\Theta|\le m$, and suppose that there are at least $M$ internal transitions.  Then
\begin{equation}\label{eq:affine-main}
  m\gg \min\{M,p\}^{8/15}\mu(\Theta)^{-4/15}.
\end{equation}
\end{theorem}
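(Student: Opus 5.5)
We argue by Cauchy--Schwarz on the fibres of the internal transitions, convert the second moment into an incidence count between $A\times A$ and affine lines, and then apply \cref{thm:cartesian-incidence} in the form \eqref{eq:cartesian-special}.

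\emph{Reduction.} If $m^3>c_0p^2/2$, then $m\gg p^{2/3}$. Since $\min\{M,p\}\le p$ and $\mu\ge1$, this already gives $m\gg \min\{M,p\}^{8/15}\mu^{-4/15}$. So we may assume $m^3\le c_0 p^2/2$, which puts us in the characteristic range of \cref{thm:cartesian-incidence}.

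\emph{Fibre decomposition.} Let $\cE$ be a set of at least $M$ internal transitions $(x,\theta)$. For $u\in A$ put
\[
r(u)=\#\{(x,\theta)\in\cE: T_\theta(x)=u\}.
\]
Then $\sum_{u\in A} r(u)\ge M$, and Cauchy--Schwarz gives
\[
M^2\le m\sum_{u} r(u)^2 .
\]
The second moment counts quadruples $(x,\theta,x',\eta)$ with $T_\theta(x)=T_\eta(x')$, that is, $x'=T_\eta^{-1}T_\theta(x)$. The diagonal $\theta=\eta$ forces $x=x'$, since $T_\theta$ is injective on $\F_p^\times$, so it contributes at most $|\cE|\le m|\Theta|\le m^2$.

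\emph{Off-diagonal terms.} For $\theta\ne\eta$, the map $\ell=T_\eta^{-1}T_\theta$ is a nonidentity affine map by \eqref{eq:affine-quotient} applied with the roles swapped. The condition $x'=\ell(x)$ says that the point $(x,x')\in A\times A$ lies on the graph of $\ell$. Each distinct nonidentity line $\ell$ arises from at most $\mu$ ordered pairs $(\theta,\eta)$; the inverse map $T_\theta T_\eta^{-1}$ has the same multiplicity structure after relabelling. Let $\cL$ be the set of distinct lines that occur. Then
\[
\sum_u r(u)^2\le m^2+\mu\, I(A\times A,\cL),\qquad |\cL|\le |\Theta|^2\le m^2 .
\]
Pad $\cL$ to exactly $N=m^2$ lines. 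The hypotheses $xy^2=m^3\le m^6=N^3$ and $xN=m^3\le c_0p^2$ hold, so \eqref{eq:cartesian-special} gives $I\ll m^{11/4}$.

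\emph{Conclusion.} Combining,
\[
M^2\ll m\bigl(m^2+\mu m^{11/4}\bigr)\ll \mu m^{15/4},
\]
so $m\gg M^{8/15}\mu^{-4/15}\ge \min\{M,p\}^{8/15}\mu^{-4/15}$.

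\emph{Main obstacle.} The delicate point is the bookkeeping that makes the line count $|\cL|\le m^2$ and the multiplicity $\mu$ interact correctly: we must bound incidences weighted by multiplicity by $\mu$ times the incidences of distinct lines. We must also check that points with $x$ or $x'$ outside the domain, for instance $x'=0$, cause no trouble. They do not, because we only overcount by including all of $A\times A$.
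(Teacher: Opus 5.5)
Your proposal has a genuine gap at the off-diagonal step. You group transitions by their common \emph{output} $u$, and eliminating $u$ produces the left quotient $T_\eta^{-1}T_\theta$. The formula \eqref{eq:affine-quotient} computes the right quotient $T_\eta T_\theta^{-1}$ instead; swapping the roles there gives $T_\theta T_\eta^{-1}$, which still has the inverse on the right. In fact
\[
  T_\eta^{-1}T_\theta(x)=\frac{c_\eta x}{(b_\theta-b_\eta)x+c_\theta}.
\]
This is a M\"obius map fixing $0$, and it is affine only when $b_\theta=b_\eta$. So the point $(x,x')\in A\times A$ lies on a hyperbola rather than a line, and \cref{thm:cartesian-incidence} does not apply. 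Your relabelling remark does not help, because the inverse of a left quotient, $T_\theta^{-1}T_\eta$, is again a left quotient.

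The natural repair is to pass to reciprocals, so that $(1/x,1/x')$ lies on the line of $L_\eta^{-1}L_\theta$. But this brings in the multiplicity of \emph{left} quotients in $\Aff(1)$, and $\mu(\Theta)$ does not bound it, since $\mu(\Theta)$ is defined through $T_\eta T_\theta^{-1}=L_\eta L_\theta^{-1}$. For example, take $T_{(\pm1,b)}(x)=b\pm1/x$ with $b$ ranging over a Sidon set. Then $\mu(\Theta)\le4$, yet $T_{(-1,b)}^{-1}T_{(1,b)}(x)=-x$ for every $b$, so this single map has $|\Theta|$ representations.

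The fix is to apply Cauchy--Schwarz over the common \emph{input}, as the paper does. With $R(x)=\#\{\theta:(x,\theta)\in\cE\}$ you get $M^2\le m\sum_xR(x)^2$. For $\theta\ne\eta$, the injective substitution $x\mapsto(u,v)=(T_\theta(x),T_\eta(x))$ places $(u,v)\in A\times A$ on the affine line $v=T_\eta T_\theta^{-1}(u)$. Its number of parameter representations is exactly what $\mu(\Theta)$ bounds. With that change, the rest of your argument goes through as written: the large-$m$ reduction, the diagonal bound $m|\Theta|\le m^2$, the padding to $m^2$ lines, and the final arithmetic.
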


\begin{proof}
Put $L=\min\{M,p\}$ and choose exactly $L$ distinct internal transitions.  For $x\in A\cap\F_p^\times$, let
\[
  R(x)=\#\{\theta:(x,\theta)\text{ was selected}\}.
\]
Then $\sum_xR(x)=L$, so Cauchy--Schwarz gives
\begin{equation}\label{eq:affine-CS}
  L^2
  \le m\sum_xR(x)^2
  \le m\sum_{\theta,\eta\in\Theta}M(\theta,\eta),
\end{equation}
where
\[
  M(\theta,\eta)
  =\#\{x\in A\cap\F_p^\times:
       T_\theta(x),T_\eta(x)\in A\}.
\]
The diagonal pairs contribute at most $|\Theta|m\le m^2$.

For $\theta\ne\eta$, put $u=T_\theta(x)$ and $v=T_\eta(x)$.  The substitution is injective in $x$, and \eqref{eq:affine-quotient} places $(u,v)\in A\times A$ on the affine line representing $T_\eta T_\theta^{-1}$.  If $\cL_\Theta$ is the set of distinct nonidentity lines that arise, then
\begin{equation}\label{eq:affine-second-moment}
  \sum_{\theta,\eta\in\Theta}M(\theta,\eta)
  \le m^2+\mu(\Theta)I(A\times A,\cL_\Theta).
\end{equation}
Moreover, $|\cL_\Theta|\le|\Theta|^2\le m^2$.

Let $c_0$ be the constant in \cref{thm:cartesian-incidence}.  If $m^3>c_0p^2$, then
\[
  m\gg p^{2/3}\ge L^{2/3},
\]
which is stronger than \eqref{eq:affine-main}.  Otherwise, enlarge $\cL_\Theta$ to a set of exactly $m^2$ nonvertical affine lines and apply \cref{thm:cartesian-incidence}.  By \eqref{eq:cartesian-special},
\[
  I(A\times A,\cL_\Theta)\ll m^{11/4}.
\]
Combining this estimate with \eqref{eq:affine-CS} and \eqref{eq:affine-second-moment} yields
\[
  L^2\ll m\bigl(m^2+\mu(\Theta)m^{11/4}\bigr)
  \ll \mu(\Theta)m^{15/4}.
\]
Rearranging proves the theorem.
\end{proof}

The theorem can be viewed inside the affine group.  Let $J(x)=1/x$ and $L_\theta(t)=c_\theta t+b_\theta$.  Then $T_\theta=L_\theta J$, and the quotient multiplicity in \eqref{eq:affine-multiplicity} is exactly the multiplicative energy of the parameter set $(L_\theta)$ at a single quotient.

\begin{proposition}\label{prop:affine-curve}
Let $\Gamma\subseteq\Aff(1,\overline{\F}_p)$ be an irreducible algebraic curve of degree at most $d$.  Assume that no nonidentity element $g\in\Aff(1,\overline{\F}_p)$ satisfies $g\Gamma=\Gamma$.  If the distinct affine maps $L_\theta$ all lie on $\Gamma$, then
\begin{equation}\label{eq:affine-curve-multiplicity}
  \mu(\Theta)\le\max\{1,d^2\}.
\end{equation}
\end{proposition}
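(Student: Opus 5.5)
The plan is to convert the quotient-multiplicity count into an intersection count between two curves in $\Aff(1,\overline{\F}_p)$ and then apply Bézout's theorem. First recall how quotients arise. Since $T_\theta=L_\theta J$, we have
\[
  T_\eta T_\theta^{-1}=L_\eta J J^{-1}L_\theta^{-1}=L_\eta L_\theta^{-1}.
\]
So for a fixed nonidentity affine map $\ell$, a pair $(\theta,\eta)$ represents $\ell$ exactly when $L_\eta=\ell L_\theta$. Because the $L_\theta$ are distinct, each $\eta$ corresponds to at most one $\theta$, and vice versa. Hence the number of representations of $\ell$ equals $\#\{\theta: L_\theta\in\Gamma\cap\ell^{-1}\Gamma\}$. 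This is at most the number of points of $\Gamma\cap\ell^{-1}\Gamma$, counted without multiplicity.

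Next we set up coordinates. Identify $\Aff(1,\overline{\F}_p)$ with the open subset $\{(c,b):c\ne0\}$ of the affine plane, where $(c,b)$ corresponds to $t\mapsto ct+b$. Composition $\ell\circ L$ with $\ell(t)=\lambda t+\kappa$ sends $(c,b)\mapsto(\lambda c,\lambda b+\kappa)$. This is an invertible linear-affine change of coordinates of the plane, so left translation $g\Gamma$ of a curve of degree at most $d$ is again an irreducible curve of degree at most $d$. Take $\Gamma$ to mean the Zariski closure in the plane; the closure of $\ell^{-1}\Gamma$ is the image of the closure of $\Gamma$ under that plane map.

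Now compare the two curves. Since $\ell^{-1}\ne\id$ and, by hypothesis, no nonidentity element stabilizes $\Gamma$, we have $\ell^{-1}\Gamma\ne\Gamma$. Both curves are irreducible, so they share no common component. Bézout's theorem then gives $|\Gamma\cap\ell^{-1}\Gamma|\le d\cdot d=d^2$. Taking the maximum over $\ell\ne\id$ and including the $1$ in the definition \eqref{eq:affine-multiplicity} yields $\mu(\Theta)\le\max\{1,d^2\}$.

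The main point needing care is the stabilizer interpretation. The hypothesis $g\Gamma=\Gamma$ must be read for the closed curve in the plane (equivalently, as an equality of the curves inside the group), and the translation must extend to a plane automorphism so that degree is preserved. Both follow from the explicit formula $(c,b)\mapsto(\lambda c,\lambda b+\kappa)$. A degenerate case, such as $\Gamma$ being a line $c=c_0$, is already covered: it is stabilized by translations, which the hypothesis excludes.
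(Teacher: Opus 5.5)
Your proposal is correct and follows essentially the paper's route: $L_\eta=\ell L_\theta$ puts $L_\theta$ in $\Gamma\cap\ell^{-1}\Gamma$. The stabilizer hypothesis makes these two distinct irreducible curves, and B\'ezout, together with the fact that $L_\theta$ determines $\eta$, bounds the number of pairs by $d^2$. Your explicit coordinates $(c,b)\mapsto(\lambda c,\lambda b+\kappa)$ usefully spell out why translates keep degree at most $d$, which the paper leaves implicit. One harmless slip: the number of representations of $\ell$ is at most $\#\{\theta:L_\theta\in\Gamma\cap\ell^{-1}\Gamma\}$, not equal to it, since $\ell L_\theta\in\Gamma$ need not be one of the $L_\eta$; only this upper bound is used.
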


\begin{proof}
Fix a nonidentity affine map $g$.  The relation $L_\eta L_\theta^{-1}=g$ is equivalent to $L_\eta=gL_\theta$.  Hence every representation gives a point of
\[
  \Gamma\cap g^{-1}\Gamma.
\]
By assumption these are distinct irreducible curves.  B\'ezout's theorem bounds their intersection by $d^2$, counted over the algebraic closure.  The point $L_\theta$ determines $L_\eta$, so the same bound holds for the number of ordered parameter pairs.
\end{proof}

A second criterion is elementary and is the one used for derangements and ordered subsets.

\begin{corollary}\label{cor:quadratic-reciprocal}
Let $A\subseteq\F_p$, let $B\subseteq A$, and let $F:B\to\F_p$ be injective.  Let
\[
  Q(t)=ct^2+dt+e,
  \qquad c\ne0,
\]
and suppose that $Q(F(a))\ne0$ for every $a\in B$.  Define
\begin{equation}\label{eq:quadratic-transition}
  T_a(x)=F(a)+\frac{Q(F(a))}{x}.
\end{equation}
If at least $M$ pairs $(x,a)\in(A\cap\F_p^\times)\times B$ satisfy $T_a(x)\in A$, then
\begin{equation}\label{eq:quadratic-bound}
  |A|\gg\min\{M,p\}^{8/15}.
\end{equation}
\end{corollary}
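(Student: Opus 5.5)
We apply \cref{thm:affine} with parameter set $\Theta=B$, coefficients $b_a=F(a)$ and $c_a=Q(F(a))\ne0$. Two things must be checked: the hypotheses $|\Theta|\le m$ and pairwise distinctness, and an absolute bound on $\mu(\Theta)$.

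The first is immediate. We have $|B|\le|A|=m$ because $B\subseteq A$. Since $F$ is injective, the constant terms $b_a=F(a)$ are distinct, so the maps $T_a$ are pairwise distinct.

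The main step is bounding the quotient multiplicity. By \eqref{eq:affine-quotient}, $T_\eta T_\theta^{-1}$ is the affine map $u\mapsto \lambda u+\kappa$, where
\[
  \lambda=\frac{Q(s)}{Q(t)},\qquad \kappa=s-\lambda t,\qquad t=F(\theta),\ s=F(\eta).
\]
Fix a nonidentity pair $(\lambda,\kappa)$. Since $F$ is injective, counting pairs $(\theta,\eta)$ is the same as counting pairs $(t,s)\in\F_p^2$ that satisfy
\[
  s=\lambda t+\kappa,\qquad Q(s)=\lambda Q(t).
\]
Substituting the first equation into the second gives
\[
  c(\lambda t+\kappa)^2+d(\lambda t+\kappa)+e-\lambda(ct^2+dt+e)=0,
\]
a polynomial in $t$ of degree at most $2$. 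It has at most $2$ roots unless it vanishes identically. In the identically zero case, comparing coefficients gives three conditions:
\begin{itemize}
  \item the $t^2$ coefficient gives $c\lambda^2=c\lambda$, so $\lambda=1$ because $c\ne0$ and $\lambda\ne0$;
  \item with $\lambda=1$, the $t$ coefficient gives $2c\kappa=0$;
  \item the constant term gives $c\kappa^2+d\kappa=0$.
\end{itemize}
If $p\ne2$, the second condition forces $\kappa=0$, which is the identity map and is excluded. If $p=2$, the statement is trivial because all quantities are absolutely bounded, so we may take $p$ odd (or simply absorb small $p$ into the implied constant). Hence $\mu(\Theta)\le2$.

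Alternatively, this is \cref{prop:affine-curve} applied to the conic $\{(Q(t),t)\}$ in $\Aff(1)$, but the direct computation avoids checking irreducibility and the stabilizer condition.

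Finally, the pairs $(x,a)$ in the hypothesis are exactly internal transitions. \Cref{thm:affine} with $\mu\le2$ therefore gives $|A|\gg\min\{M,p\}^{8/15}$. The only delicate point is the degenerate case where the quadratic vanishes identically, and the computation above handles it.
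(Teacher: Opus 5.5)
Your proposal is correct and follows the paper's proof essentially step for step. You reduce to \cref{thm:affine} with $\Theta=B$, write the quotient as $u\mapsto\lambda u+\kappa$ subject to $s=\lambda t+\kappa$ and $Q(\lambda t+\kappa)=\lambda Q(t)$, and use the $t^2$ and $t$ coefficients $c\lambda(\lambda-1)$ and $2c\lambda\kappa$ to get $\mu\le2$. You also verify explicitly that $|B|\le|A|$, that the maps $T_a$ are pairwise distinct, and that $p=2$ must be set aside (there $2c\kappa=0$ is vacuous and a nonidentity translation can preserve $Q$). The paper leaves these minor points implicit.
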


\begin{proof}
Put $t=F(a)$ and $s=F(b)$.  If the relative transition $T_bT_a^{-1}$ is the affine map $u\mapsto\lambda u+\tau$, then
\begin{equation}\label{eq:quadratic-conditions}
  s=\lambda t+\tau,
  \qquad
  Q(\lambda t+\tau)=\lambda Q(t).
\end{equation}
The second relation is a polynomial equation of degree at most two in $t$.  It cannot vanish identically unless $\lambda=1$ and $\tau=0$: the coefficients of $t^2$ and $t$ are $c\lambda(\lambda-1)$ and $2c\lambda\tau$, respectively.  Thus every nonidentity quotient has at most two possible values of $t$, and the injectivity of $F$ gives at most two parameter pairs.  Apply \cref{thm:affine} with $\mu\le2$.
\end{proof}

\begin{corollary}\label{cor:affine-ratio}
Let $u_0,u_1,\ldots,u_K\in\F_p^\times$, and put
\[
  \rho_n=\frac{u_{n+1}}{u_n}.
\]
Assume that
\begin{equation}\label{eq:affine-ratio-recurrence}
  \rho_{n+1}=\alpha\rho_n+\beta,
  \qquad \alpha\beta\ne0,
\end{equation}
for $0\le n\le K-2$.  If $L$ of the ordered pairs $(u_n,u_{n+1})$ are distinct, then
\begin{equation}\label{eq:affine-ratio-bound}
  |\{u_0,u_1,\ldots,u_K\}|
  \gg\min\{L,p\}^{8/15}.
\end{equation}
\end{corollary}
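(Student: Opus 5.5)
The plan is to recast the ratio recurrence as a quadratic reciprocal transition and then apply \cref{cor:quadratic-reciprocal}. Since every $u_n\ne0$, we have $u_{n+2}=u_{n+1}\rho_{n+1}$ and $\rho_n=u_{n+1}/u_n$. Substituting \eqref{eq:affine-ratio-recurrence} gives, for $0\le n\le K-2$,
\[
  u_{n+2}=u_{n+1}\bigl(\alpha\rho_n+\beta\bigr)
  =\beta u_{n+1}+\frac{\alpha u_{n+1}^2}{u_n}.
\]
So with $a=u_{n+1}$ and $x=u_n$, the next term is $T_a(x)=\beta a+\alpha a^2/x$.

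To match the form \eqref{eq:quadratic-transition}, I would take
\[
  A=\{u_0,\dots,u_K\},\qquad B=\{u_1,\dots,u_{K-1}\}\subseteq A,\qquad F(a)=\beta a,\qquad Q(t)=\frac{\alpha}{\beta^2}t^2 .
\]
Then $F$ is injective because $\beta\ne0$. The leading coefficient of $Q$ is $c=\alpha/\beta^2\ne0$. Also $Q(F(a))=\alpha a^2\ne0$ for every $a\in B$, since $B\subseteq\F_p^\times$. Hence $T_a(x)=F(a)+Q(F(a))/x$ exactly.

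Next I count internal transitions. For each $0\le n\le K-2$, the pair $(x,a)=(u_n,u_{n+1})$ lies in $(A\cap\F_p^\times)\times B$ and satisfies $T_a(x)=u_{n+2}\in A$. Distinct ordered pairs $(u_n,u_{n+1})$ give distinct pairs $(x,a)$. Among the $L$ distinct pairs, indexed by $0\le n\le K-1$, at most one is lost by excluding $n=K-1$. So there are at least $L-1$ internal transitions.

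If $L=1$, the bound \eqref{eq:affine-ratio-bound} is trivial because $|A|\ge1$. If $L\ge2$, then $L-1\ge L/2$, and \cref{cor:quadratic-reciprocal} gives $|A|\gg\min\{L/2,p\}^{8/15}\gg\min\{L,p\}^{8/15}$.

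The only delicate point is the bookkeeping at the endpoint $n=K-1$, where the recurrence is not available. This is absorbed by the loss of at most one transition. Everything else is a direct substitution into \cref{cor:quadratic-reciprocal}.
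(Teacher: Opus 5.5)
Your proposal is correct and is essentially the paper's proof: you rewrite the recurrence as $u_{n+2}=\beta u_{n+1}+\alpha u_{n+1}^2/u_n$ and apply \cref{cor:quadratic-reciprocal} with $F(a)=\beta a$ and $Q(t)=\frac{\alpha}{\beta^2}t^2$. Your additional checks make explicit what the paper leaves implicit: that $Q(F(a))=\alpha a^2\ne0$, that $F$ is injective, and that at most one pair is lost at the endpoint $n=K-1$.
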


\begin{proof}
Equation \eqref{eq:affine-ratio-recurrence} gives
\[
  u_{n+2}
  =\beta u_{n+1}+\alpha\frac{u_{n+1}^2}{u_n}.
\]
This is \cref{cor:quadratic-reciprocal} with
\[
  F(a)=\beta a,
  \qquad
  Q(t)=\frac{\alpha}{\beta^2}t^2.
\]
\end{proof}

\section{Applications of the affine theorem}\label{sec:affine-applications}

\subsection{Arithmetic Pochhammer products}

Let $1\le r<p$ and $d\ge1$, and define
\begin{equation}\label{eq:pochhammer-def}
  U_0(r,d)=1,
  \qquad
  U_n(r,d)=\prod_{j=0}^{n-1}(r+jd).
\end{equation}
Let
\begin{equation}\label{eq:J-rd}
  J=1+\left\lfloor\frac{p-1-r}{d}\right\rfloor.
\end{equation}
Then every factor occurring in $U_0,\ldots,U_J$ lies in $\{1,\ldots,p-1\}$.

\begin{proposition}\label{prop:pochhammer}
With the notation above,
\begin{equation}\label{eq:pochhammer-bound}
  \left|\{U_0(r,d),\ldots,U_J(r,d)\}\bmod p\right|
  \gg (J+1)^{8/15}.
\end{equation}
Consequently, for fixed $r,d$ and $p\to\infty$,
\[
  \left|\{U_n(r,d)\bmod p:r+(n-1)d<p\}\right|
  \gg_d p^{8/15}.
\]
\end{proposition}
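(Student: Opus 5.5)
Our plan is to reduce to \cref{cor:affine-ratio}, mimicking the factorial identity \eqref{eq:factorial-identity-intro}. Put $u_n=U_n(r,d)\bmod p$ for $0\le n\le J$. Every factor $r+jd$ with $0\le j\le J-1$ lies in $\{1,\ldots,p-1\}$, so each $u_n\in\F_p^\times$. The consecutive ratios are
\[
  \rho_n=\frac{u_{n+1}}{u_n}=r+nd \pmod p,
  \qquad 0\le n\le J-1.
\]
They satisfy $\rho_{n+1}=\rho_n+d$. This is \eqref{eq:affine-ratio-recurrence} with $\alpha=1$ and $\beta=d$, and $\alpha\beta\ne0$ as soon as $p\nmid d$. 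We then apply \cref{cor:affine-ratio} with $K=J$.

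It remains to count the distinct pairs $(u_n,u_{n+1})$, $0\le n\le J-1$. The pair determines $\rho_n=u_{n+1}/u_n=r+nd$. The integers $r+nd$ for $0\le n\le J-1$ are distinct and lie in $[1,p-1]$, so they remain distinct mod $p$. Hence all $J$ pairs are distinct, and $L=J$. Since $J\le p$, the corollary gives $|\{u_0,\ldots,u_J\}|\gg J^{8/15}\gg(J+1)^{8/15}$ whenever $J\ge1$. The case $J=0$ is trivial, as the set then contains $u_0=1$.

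Two degenerate cases need separate treatment.
\begin{itemize}
  \item If $p\mid d$, then $d\ge p$, so $(p-1-r)/d<1$ and $J\le1$. The bound then holds trivially because the set is nonempty.
  \item The recurrence is only required for $0\le n\le K-2=J-2$, which is consistent with the ratios computed above.
\end{itemize}

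For the consequence with $r,d$ fixed and $p\to\infty$, we have $r<p$ and
\[
  J+1\ge\frac{p-1-r}{d}\gg_d p
\]
for $p$ large. The indices $n\le J$ are exactly those with $r+(n-1)d\le p-1$, so the set in question is $\{U_0,\ldots,U_J\}$ modulo $p$. This gives the claimed $\gg_d p^{8/15}$.

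The only step needing care is verifying the distinctness count $L=J$. This uses that the ratio sequence $r+nd$ does not wrap around modulo $p$ within the block, and that is exactly why the range is cut at $J$.
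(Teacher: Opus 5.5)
Your proposal is correct and follows the paper's proof: the ratios $\rho_n=r+nd$ are distinct integers in $\{1,\ldots,p-1\}$, so the adjacent pairs are distinct, and \cref{cor:affine-ratio} applies with $(\alpha,\beta)=(1,d)$. One small correction: only the pairs with $0\le n\le J-2$ give internal transitions, since the pair $(u_{J-1},u_J)$ has no successor $u_{J+1}$ in the block, so the correct count is $L=J-1$ as in the paper; this changes nothing for $J\ge2$.
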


\begin{proof}
The adjacent ratios are
\[
  \rho_n=\frac{U_{n+1}}{U_n}=r+nd,
  \qquad 0\le n\le J-1,
\]
and they satisfy $\rho_{n+1}=\rho_n+d$.  They are pairwise distinct modulo $p$ because they are distinct integers in $\{1,\ldots,p-1\}$.  Hence the adjacent pairs $(U_n,U_{n+1})$, $0\le n\le J-2$, are distinct.  Apply \cref{cor:affine-ratio} with $(\alpha,\beta)=(1,d)$.  The cases of bounded $J$ are absorbed into the implied constant.
\end{proof}

Taking $(r,d)=(1,1)$ recovers \eqref{eq:factorial-known}.  The choices $(r,d)=(1,2)$ and $(2,2)$ give the odd and even double factorials.  More generally, fixed-step multifactorial sequences are arithmetic Pochhammer products.

\subsection{Gaussian $q$-factorials}

Let $g\in\F_p^\times$ with $g\ne1$, and write $L=\ord(g)$.  For $0\le n<L$, define
\begin{equation}\label{eq:qinteger-def}
  [n]_g=\frac{1-g^n}{1-g},
  \qquad
  [0]_g!=1,
  \qquad
  [n]_g!=\prod_{j=1}^n[j]_g.
\end{equation}
These are the standard Gaussian integers and factorials specialized at $g$; see \cite{GasperRahman2004}.  Since $g^j\ne1$ for $1\le j<L$, all terms in \eqref{eq:qinteger-def} are nonzero.

\begin{proposition}\label{prop:qfactorial}
For every $g\ne1$ of order $L$,
\begin{equation}\label{eq:qfactorial-bound}
  \left|\{[0]_g!,[1]_g!,\ldots,[L-1]_g!\}\right|
  \gg L^{8/15}.
\end{equation}
\end{proposition}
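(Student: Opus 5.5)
Apply \cref{cor:affine-ratio} to $u_n=[n]_g!$ with $0\le n\le K:=L-1$. All these terms are nonzero, as noted after \eqref{eq:qinteger-def}. The adjacent ratios are
\[
  \rho_n=\frac{u_{n+1}}{u_n}=[n+1]_g=\frac{1-g^{n+1}}{1-g},\qquad 0\le n\le L-2.
\]
The first step is to exhibit an affine recurrence for them. Since $1-g^{n+2}=g(1-g^{n+1})+(1-g)$, we get
\[
  \rho_{n+1}=g\rho_n+1 .
\]
This has the form \eqref{eq:affine-ratio-recurrence} with $(\alpha,\beta)=(g,1)$, and $\alpha\beta=g\ne0$.

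The second step counts distinct adjacent pairs. The ratios $\rho_n$, $0\le n\le L-2$, are pairwise distinct, because $g^{n+1}$ runs over distinct powers $g,g^2,\dots,g^{L-1}$ of an element of order $L$. Hence the pairs $(u_n,u_{n+1})$ for $0\le n\le L-2$ are pairwise distinct, since each pair determines its ratio. So $L-1$ distinct pairs are available. Moreover $L\le p-1<p$, so $\min\{L-1,p\}=L-1$.

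\Cref{cor:affine-ratio} then gives $|\{[0]_g!,\dots,[L-1]_g!\}|\gg (L-1)^{8/15}\gg L^{8/15}$ for $L\ge2$. Small values of $L$ are absorbed into the implied constant, since the value set always contains $1$.

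The only potential obstacle is checking the hypotheses of \cref{cor:affine-ratio} at the endpoints. The recurrence is needed only for $0\le n\le K-2=L-3$, and there both $\rho_n$ and $\rho_{n+1}$ are defined and nonzero. Everything else is immediate. The constant is absolute because \cref{cor:affine-ratio} is applied with $\mu\le2$, regardless of $g$.
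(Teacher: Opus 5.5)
Your proof is correct and follows the paper's argument: the affine recurrence $\rho_{n+1}=g\rho_n+1$, distinctness of the $[j]_g$ from distinctness of $g,\dots,g^{L-1}$, and an application of \cref{cor:affine-ratio}. One small slip that does not affect the result: only the pairs with $0\le n\le L-3$ are internal transitions, since the pair with $n=L-2$ would need $u_L$, so the count is $L-2$ rather than $L-1$.
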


\begin{proof}
Put $u_n=[n]_g!$.  Its adjacent ratio is $\rho_n=[n+1]_g$.  The identity
\[
  [n+2]_g=1+g[n+1]_g
\]
gives
\[
  \rho_{n+1}=g\rho_n+1.
\]
Moreover, $[1]_g,\ldots,[L-1]_g$ are pairwise distinct because $g,g^2,\ldots,g^{L-1}$ are distinct.  Hence the adjacent pairs $(u_n,u_{n+1})$, $0\le n\le L-3$, are distinct.  Apply \cref{cor:affine-ratio}; the finitely many cases $L<4$ are harmless.
\end{proof}

If $g$ is a primitive root, then $L=p-1$ and \eqref{eq:qfactorial-bound} is $\gg p^{8/15}$.

\subsection{A zero-spacing observation}

The next two sequences can vanish modulo $p$, while \cref{cor:quadratic-reciprocal} requires a nonzero input and a nonzero reciprocal coefficient.  The following elementary observation supplies enough admissible adjacent pairs.

\begin{lemma}\label{lem:zero-spacing}
Let $w_0,w_1,\ldots,w_M\in\F_p$, and suppose that any two zero terms have indices differing by at least three.  Then
\begin{equation}\label{eq:good-adjacent-pairs}
  \#\{0\le n<M:w_nw_{n+1}\ne0\}
  \ge\frac{M-6}{3}.
\end{equation}
\end{lemma}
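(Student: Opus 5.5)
We count the bad indices $n\in\{0,\ldots,M-1\}$, namely those with $w_nw_{n+1}=0$, and subtract their number from $M$. Let $Z=\{i\in\{0,\ldots,M\}:w_i=0\}$. An index $n$ is bad exactly when $n\in Z$ or $n+1\in Z$. So the bad set lies in $\bigcup_{i\in Z}\{i-1,i\}$, and each zero destroys at most two adjacent pairs.

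Next we bound $|Z|$ using the spacing hypothesis. Consecutive elements of $Z$ differ by at least three, so if $Z=\{i_1<\cdots<i_k\}$ then $i_k\ge i_1+3(k-1)\ge3(k-1)$. Since $i_k\le M$, this gives $k\le M/3+1$.

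Combining the two steps, the number of bad indices is at most $2|Z|\le 2M/3+2$. Hence the number of good indices is at least
\[
  M-\frac{2M}{3}-2=\frac{M-6}{3},
\]
which is \eqref{eq:good-adjacent-pairs}. When $M<6$ the right-hand side is negative and the claim is trivial.

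Nothing here is a real obstacle; the only point needing care is the off-by-one bookkeeping at the endpoints $n=0$ and $n=M-1$. There the union bound only overcounts, since $i-1=-1$ or $i=M$ is not an admissible $n$, so the estimate remains valid.
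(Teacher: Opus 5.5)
Your proof is correct and follows the paper's argument: bound the number of zero terms by $(M+3)/3$ using the spacing hypothesis, note that each zero spoils at most two adjacent pairs, and subtract from $M$. The only difference is that you derive the zero count explicitly via $3(k-1)\le i_k\le M$, where the paper simply states it.
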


\begin{proof}
There are at most $(M+3)/3$ zero terms.  Each zero can spoil at most two adjacent pairs, so the number of good adjacent pairs is at least
\[
  M-2\frac{M+3}{3}=\frac{M-6}{3}.
\]
\end{proof}

\subsection{Derangements}

Let $D_n$ denote the number of derangements of an $n$-element set.  We use the standard recurrence
\begin{equation}\label{eq:derangement-recurrence}
  D_0=1,
  \qquad
  D_{n+1}=(n+1)D_n+(-1)^{n+1}.
\end{equation}
Arithmetic aspects of this sequence are studied in \cite{SunZagier2011,Miska2016}.

\begin{proposition}\label{prop:derangements}
For every prime $p$,
\begin{equation}\label{eq:derangement-bound}
  \left|\{D_0,D_1,\ldots,D_{p-1}\}\bmod p\right|
  \gg p^{8/15}.
\end{equation}
\end{proposition}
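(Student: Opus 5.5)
The plan is to rewrite the derangement recurrence as a quadratic reciprocal transition of the form \eqref{eq:quadratic-transition}, split by the parity of the index, and then apply \cref{cor:quadratic-reciprocal} with $A=\{D_0,\ldots,D_{p-1}\}\bmod p$. Write $\varepsilon_n=(-1)^{n+1}$. Then \eqref{eq:derangement-recurrence} gives $D_{n+1}=(n+1)D_n+\varepsilon_n$. Applying the recurrence once more gives $D_{n+2}=(n+2)D_{n+1}-\varepsilon_n$, since $\varepsilon_{n+1}=-\varepsilon_n$. Put $x=D_n$ and $a=D_{n+1}$. If $x\ne0$, we can recover the index as $n+1=(a-\varepsilon_n)/x$ in $\F_p$. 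Substituting this gives
\[
  D_{n+2}=(a-\varepsilon_n)+\frac{a(a-\varepsilon_n)}{x}.
\]
For fixed $\varepsilon\in\{\pm1\}$, set $F(a)=a-\varepsilon$ and $Q(t)=t^2+\varepsilon t$. Then $Q(F(a))=a(a-\varepsilon)$, so this is exactly \eqref{eq:quadratic-transition} with leading coefficient $c=1$. The map $F$ is clearly injective.

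Next I verify the side conditions for indices $0\le n\le p-3$, so that $n+2\le p-1$ and $n+1,n+2\not\equiv0$. I call $n$ good if $D_nD_{n+1}\ne0$.
\begin{itemize}
\item For good $n$, $Q(F(a))\ne0$. Indeed $a\ne0$, and $a=\varepsilon_n$ would force $(n+1)D_n=0$, i.e.\ $D_n=0$, which is excluded.
\item Zeros of $D_n$ are spaced by at least three. If $D_n=0$, then $D_{n+1}=\varepsilon_n\ne0$ and $D_{n+2}=(n+2)\varepsilon_n-\varepsilon_n=(n+1)\varepsilon_n\ne0$.
\end{itemize}
Hence \cref{lem:zero-spacing}, applied to $w_n=D_n$ with $0\le n\le p-2$, gives $\gg p$ good indices. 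By pigeonhole, one parity class (one fixed value of $\varepsilon$) contains at least $(p-O(1))/6$ good indices $n\le p-3$.

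Fix that $\varepsilon$. Let $B\subseteq A$ be the set of values $D_{n+1}$ over good $n$ in this class, with $F$ and $Q$ as above. Each such $n$ yields a pair $(x,a)=(D_n,D_{n+1})\in(A\cap\F_p^\times)\times B$ with $T_a(x)=D_{n+2}\in A$.

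The main point to check is that these pairs are distinct, so that $M\gg p$ in \cref{cor:quadratic-reciprocal}. Within one parity class, the pair $(x,a)$ determines $n+1\equiv(a-\varepsilon)/x \pmod p$. Since $0\le n\le p-3$, this residue determines $n$ uniquely. So distinct good indices give distinct pairs, and $M\gg p$. \Cref{cor:quadratic-reciprocal} then yields $|A|\gg p^{8/15}$. Small primes are absorbed into the implied constant.
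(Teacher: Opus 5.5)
Your proof is correct and follows essentially the same route as the paper. Both split by parity and take $F_\varepsilon(a)=a-\varepsilon$, $Q_\varepsilon(t)=t(t+\varepsilon)$. Both use \cref{lem:zero-spacing} to get $\gg p$ indices with $D_nD_{n+1}\ne0$, recover the index via $n+1=(a-\varepsilon)/x$, and then apply \cref{cor:quadratic-reciprocal} with $M\gg p$.
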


\begin{proof}
We may assume that $p$ is odd and sufficiently large.  If $D_j=0$ in $\F_p$, then \eqref{eq:derangement-recurrence} gives
\[
  D_{j+1}=(-1)^{j+1},
  \qquad
  D_{j+2}=(j+1)(-1)^{j+1}.
\]
For $0\le j\le p-2$, the latter is nonzero whenever its index lies in the range under consideration.  Thus the zero terms among $D_0,\ldots,D_{p-2}$ have index gaps at least three.  By \cref{lem:zero-spacing}, there are $\gg p$ indices $0\le n\le p-3$ such that
\[
  D_nD_{n+1}\ne0.
\]
At least half of these indices have the same parity.  Fix that parity and put
\[
  \varepsilon=(-1)^{n+1},
  \qquad
  x=D_n,
  \qquad
  a=D_{n+1}.
\]
Then
\[
  a-\varepsilon=(n+1)x.
\]
Using the recurrence once more,
\begin{align*}
  D_{n+2}
  &=(n+2)a-\varepsilon\\
  &=(a-\varepsilon)+\frac{a(a-\varepsilon)}{x}.
\end{align*}
Therefore the transition has the form
\[
  T_a(x)=F_\varepsilon(a)+\frac{Q_\varepsilon(F_\varepsilon(a))}{x},
  \qquad
  F_\varepsilon(a)=a-\varepsilon,
  \qquad
  Q_\varepsilon(t)=t(t+\varepsilon).
\]
For every selected index, $F_\varepsilon(a)=(n+1)x\ne0$ and $F_\varepsilon(a)+\varepsilon=a\ne0$, so the reciprocal coefficient is nonzero.  The selected ordered pairs $(x,a)$ are distinct, because
\[
  n+1=\frac{a-\varepsilon}{x}
\]
recovers the index in $\F_p$.  Thus \cref{cor:quadratic-reciprocal} applies with $M\gg p$ and proves \eqref{eq:derangement-bound}.
\end{proof}

\subsection{Ordered subsets}

Let
\begin{equation}\label{eq:arrangement-def}
  R_n=\sum_{k=0}^n\frac{n!}{(n-k)!}
  =n!\sum_{j=0}^n\frac1{j!}.
\end{equation}
Thus $R_n$ counts all ordered subsets, or injective words, formed from an $n$-element set; see \cite{Stanley2012}.  It satisfies
\begin{equation}\label{eq:arrangement-recurrence}
  R_0=1,
  \qquad
  R_{n+1}=(n+1)R_n+1.
\end{equation}

\begin{proposition}\label{prop:arrangements}
For every prime $p$,
\begin{equation}\label{eq:arrangement-bound}
  \left|\{R_0,R_1,\ldots,R_{p-1}\}\bmod p\right|
  \gg p^{8/15}.
\end{equation}
\end{proposition}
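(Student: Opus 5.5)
The plan follows the derangement argument closely, using \cref{lem:zero-spacing} and \cref{cor:quadratic-reciprocal}. We may assume $p$ is odd and sufficiently large. The first step is to control zeros. If $R_j=0$ in $\F_p$, then \eqref{eq:arrangement-recurrence} gives $R_{j+1}=1$ and $R_{j+2}=(j+2)+1=j+3$. This is nonzero unless $j+3\equiv0\pmod p$, i.e.\ unless $j=p-3$. Hence the zero terms among $R_0,\ldots,R_{p-4}$ (or among $R_0,\ldots,R_{p-2}$, after discarding a bounded number of indices at the end) have index gaps at least three. \Cref{lem:zero-spacing} then gives $\gg p$ indices $0\le n\le p-3$ with $R_nR_{n+1}\ne0$.

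Next I derive the transition. For such $n$, put $x=R_n$ and $a=R_{n+1}$. Then $a-1=(n+1)x$. Using the recurrence once more,
\[
  R_{n+2}=(n+2)a+1=(a-1)+\frac{(a-1)}{x}\,a+\ldots
\]
More carefully, $(n+2)a+1=(n+1)a+a+1$, and $(n+1)a=\frac{a(a-1)}{x}$. So
\[
  R_{n+2}=(a+1)+\frac{a(a-1)}{x}.
\]
I take $F(a)=a+1$, so that $t=F(a)$ gives $a=t-1$ and $a-1=t-2$. The reciprocal coefficient is then $Q(t)=(t-1)(t-2)$, which is quadratic with leading coefficient $1\ne0$, and $F$ is injective. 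No parity splitting is needed, since the recurrence has a constant inhomogeneous term.

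Finally I check the hypotheses of \cref{cor:quadratic-reciprocal}. Nonvanishing of the reciprocal coefficient requires $Q(F(a))=a(a-1)=R_{n+1}\cdot(n+1)R_n\ne0$. This holds for the selected indices because $R_nR_{n+1}\ne0$ and $1\le n+1\le p-2$. The inputs $x=R_n$ lie in $A\cap\F_p^\times$, where $A=\{R_0,\ldots,R_{p-1}\}\bmod p$. We take $B\subseteq A$ to be the set of values $a$ that occur; the outputs $R_{n+2}$ lie in $A$ since $n+2\le p-1$. The selected pairs $(x,a)$ are distinct because $n+1=(a-1)/x$ recovers $n$ modulo $p$, and $0\le n<p$. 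Thus $M\gg p$ internal transitions exist, and the corollary yields $|A|\gg p^{8/15}$.

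The main point needing care is the zero-spacing step near the top of the range, where $j+3\equiv0$. I would handle this simply by restricting to indices $n\le p-5$, which changes the count by $O(1)$ and is absorbed into the implied constant.
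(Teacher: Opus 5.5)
Your proposal is correct and follows the paper's proof essentially verbatim. It uses the same zero-spacing step via \cref{lem:zero-spacing}, the same transition $R_{n+2}=(a+1)+a(a-1)/x$ with $F(a)=a+1$ and $Q(t)=(t-1)(t-2)$, and the same recovery $n+1=(a-1)/x$ to show the pairs are distinct before invoking \cref{cor:quadratic-reciprocal}. Your extra caution at $j=p-3$ is harmless but unnecessary: in that case $j+2=p-1$ already lies outside the range $R_0,\ldots,R_{p-2}$ on which the paper checks the gap condition.
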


\begin{proof}
Again assume that $p$ is odd and sufficiently large.  If $R_j=0$, then
\[
  R_{j+1}=1,
  \qquad
  R_{j+2}=j+3.
\]
It follows that the zero terms among $R_0,\ldots,R_{p-2}$ have index gaps at least three.  Hence there are $\gg p$ indices $0\le n\le p-3$ for which $R_nR_{n+1}\ne0$.

For such an index, put $x=R_n$ and $a=R_{n+1}$.  Since
\[
  a-1=(n+1)x,
\]
we have
\begin{align*}
  R_{n+2}
  &=(n+2)a+1\\
  &=(a+1)+\frac{a(a-1)}{x}.
\end{align*}
This is \eqref{eq:quadratic-transition} with
\[
  F(a)=a+1,
  \qquad
  Q(t)=(t-1)(t-2).
\]
The reciprocal coefficient is nonzero: $t-1=a\ne0$, while $t-2=a-1=(n+1)x\ne0$.  The selected adjacent pairs are distinct because
\[
  n+1=\frac{a-1}{x}.
\]
Apply \cref{cor:quadratic-reciprocal} with $M\gg p$.
\end{proof}

\section{Bourgain's incidence theorem}\label{sec:bourgain}

We recall the projective formulation needed in the proof.  The projective line is
\[
  \Pone(\F_p)=\F_p\cup\{\infty\}.
\]
A matrix
\[
  g=\begin{pmatrix}a&b\\c&d\end{pmatrix}\in\GL_2(\F_p)
\]
acts by
\[
  g\cdot x=\frac{ax+b}{cx+d}
\]
with the usual projective conventions.  Multiplying $g$ by a nonzero scalar does not change the action.

Associated with $g$ is the affine curve
\begin{equation}\label{eq:Gamma-g}
  \Gamma_g
  =\{(x,y)\in\F_p^2:cxy-ax+dy-b=0\}.
\end{equation}
If $cx+d\ne0$, the equation is equivalent to $y=g\cdot x$.  If $cx+d=0$, the determinant condition shows that the equation has no affine point with that first coordinate.  Hence \eqref{eq:Gamma-g} records exactly the finite-to-finite part of the projective action.

For $A\subseteq\F_p$ and $S\subseteq\SL_2(\F_p)$, define
\begin{equation}\label{eq:incidence-def}
  I(A,S)
  =\#\{(x,y,g)\in A\times A\times S:(x,y)\in\Gamma_g\}.
\end{equation}

Bourgain's theorem is based on the expansion theory of $\SL_2(\F_p)$ developed by Bourgain and Gamburd \cite{BourgainGamburd2008}, together with Helfgott's growth theorem \cite{Helfgott2008}.  We use the following form of \cite[Proposition~1]{Bourgain2012}.

\begin{theorem}[Bourgain]\label{thm:bourgain}
For every $\varepsilon>0$ and $r>1$, there is a number $\sigma=\sigma(\varepsilon,r)>0$ such that the following holds for all sufficiently large primes $p$.  Let $A\subseteq\F_p$ and $S\subseteq\SL_2(\F_p)$ satisfy
\begin{align}
  1\ll |A|&<p^{1-\varepsilon},\label{eq:bourgain-size-A}\\
  \log|A|&<r\log|S|,\label{eq:bourgain-size-S}\\
  |S\cap gK|&<|S|^{1-\varepsilon}\label{eq:bourgain-nonconc}
\end{align}
for every proper subgroup $K<\SL_2(\F_p)$ and every $g\in\SL_2(\F_p)$.  Then
\begin{equation}\label{eq:bourgain-conclusion}
  I(A,S)\ll |A|^{1-\sigma}|S|.
\end{equation}
\end{theorem}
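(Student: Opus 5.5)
This is Bourgain's modular Szemer\'edi--Trotter theorem, and in the paper it is used as a black box quoted from \cite[Proposition~1]{Bourgain2012}. My plan therefore has two parts. First I check that the formulation above is equivalent to Bourgain's. Then I recall how his proof goes, in case one wants to verify that the hypotheses \eqref{eq:bourgain-size-A}--\eqref{eq:bourgain-nonconc} are exactly the ones it needs.

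\emph{Matching the formulation.} By the discussion after \eqref{eq:Gamma-g}, a point $(x,y)\in\Gamma_g$ is precisely a pair with $cx+d\ne0$ and $y=g\cdot x$. Hence
\[
  I(A,S)=\#\{(x,g)\in A\times S:\ cx+d\ne0,\ g\cdot x\in A\}.
\]
This is Bourgain's count of incidences between the points $A\times A$ and the hyperbolae $y=g\cdot x$. Since $\pm g$ act identically, passing between $\SL_2$ and $\PSL_2$ changes $|S|$ and every coset count $|S\cap gK|$ by at most a factor $2$. The hypotheses and conclusion are therefore insensitive to this choice, up to adjusting $\varepsilon$ and the implied constants.

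\emph{Sketch of Bourgain's argument.} Let $\nu$ be the uniform probability measure on $S$, and write $\tilde\nu$ for $\nu$ pushed forward by $g\mapsto g^{-1}$. First I apply Cauchy--Schwarz in $x$, and then iterate it $k$ times. The goal is an inequality of the form
\[
  \Bigl(\frac{I(A,S)}{|S|}\Bigr)^{2^k}\ \le\ |A|^{2^k-1}\sum_{x,y\in A}(\tilde\nu*\nu)^{(2^{k-1})}\bigl(\{h:h\cdot x=y\}\bigr),
\]
that is, a bound in terms of how often a long random product from $S^{-1}S$ maps one point of $A$ to another. Next comes the Bourgain--Gamburd $L^2$-flattening lemma. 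This lemma is driven by Helfgott's product theorem and uses the nonconcentration hypothesis \eqref{eq:bourgain-nonconc} on cosets of proper subgroups. It shows that after $\ell=\ell(\varepsilon,r)$ steps the convolution power satisfies
\[
  \|(\tilde\nu*\nu)^{(\ell)}\|_\infty\le p^{-3+\delta}.
\]
The number of steps is bounded in terms of $\varepsilon$ and $r$ because hypothesis \eqref{eq:bourgain-size-S} makes $|S|$ at least a fixed power of $|A|$, so the initial $L^2$-norm $|S|^{-1/2}$ is already a power saving on the relevant scale. For fixed $x,y$, the stabilizer coset $\{h:h\cdot x=y\}$ has about $p$ elements. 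So a nearly flat measure gives each pair $(x,y)$ mass about $p^{-2+\delta}$, and the sum is at most about $|A|^2p^{-2+\delta}$. Finally, hypothesis \eqref{eq:bourgain-size-A}, namely $|A|<p^{1-\varepsilon}$, turns this into a saving $|A|^{-\sigma}$. Unwinding the Cauchy--Schwarz steps then gives \eqref{eq:bourgain-conclusion}.

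\emph{Main obstacle.} The hard step is the flattening. It requires showing that the convolution powers gain in $L^2$ at every scale. Whenever the gain fails, the Balog--Szemer\'edi--Gowers theorem produces an approximate subgroup, and Helfgott's product theorem forces that approximate subgroup to be either essentially all of $\SL_2(\F_p)$ or concentrated near a proper subgroup coset. The second alternative is exactly what \eqref{eq:bourgain-nonconc} excludes. Verifying this carefully is the content of \cite{BourgainGamburd2008,Helfgott2008}, and I would take it as given rather than reprove it.
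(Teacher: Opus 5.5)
Citing Bourgain's Proposition~1 as a black box is exactly what the paper does. It quotes the statement with the same curves $\Gamma_g$ and gives no proof. So your identification $I(A,S)=\#\{(x,g)\in A\times S:\ cx+d\ne0,\ g\cdot x\in A\}$ and the $\pm I$ remark are all the statement needs. The sketch you add as a verification, however, contains two concrete errors.

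\emph{The fibre count.} The fibre $\{h\in\SL_2(\F_p):h\cdot x=y\}$ is a coset of a Borel subgroup, so it has $p(p-1)$ elements, not about $p$. A $p^{-3+\delta}$-flat measure can therefore give each pair $(x,y)$ mass up to $p^{-1+\delta}$; the uniform value is $1/(p+1)$. The sum is thus $\le|A|^2p^{-1+\delta}$. This is exactly where $|A|<p^{1-\varepsilon}$ is used, via
$|A|^{2^k-1}\cdot|A|^2p^{-1+\delta}\le|A|^{2^k}p^{-\varepsilon+\delta}$.
With your count the hypothesis would be superfluous. Your argument would then give a power saving even for $|A|\asymp p$ and $S=\SL_2(\F_p)$, where in fact $I(A,S)\asymp|A||S|$.

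\emph{The number of steps.} This is the more serious problem: $\ell$ cannot depend only on $(\varepsilon,r)$. The measure $(\tilde\nu*\nu)^{(\ell)}$ is supported on $(S^{-1}S)^{\ell}$, which has at most $|S|^{2\ell}$ elements. So $\|(\tilde\nu*\nu)^{(\ell)}\|_\infty\le p^{-3+\delta}$ forces $\ell\ge(3-\delta)\log p/(2\log|S|)$.
\begin{itemize}
\item The hypotheses only give $|S|>|A|^{1/r}$, and $|A|$ may be a large constant or $p^{o(1)}$. The paper relies on this range, for instance for binomial rows with small $N$.
\item In that range $k=1+\log_2\ell$ is unbounded, and your saving $(\varepsilon-\delta)/2^k$ decays like $\log|S|/\log p$.
\item Your sentence about ``the relevant scale'' conflates the scale $|A|$ with the scale $p$. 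Similarly, the standard Bourgain--Gamburd flattening lemma needs nonconcentration of size $p^{-\kappa}$, while \eqref{eq:bourgain-nonconc} only gives $|S|^{-\varepsilon}$.
\end{itemize}
As written, the sketch proves the statement only when $|S|\ge p^{c}$, with $\sigma$ depending on $c$.

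\emph{A possible repair.} To cover the full range, you need a flattening target that is a bounded power of $|A|$. One way to arrange this:
\begin{itemize}
\item H\"older gives $I(A,S)^3\le|S|^2\sum_{s\in S}|sA\cap A|^3$.
\item Degenerate triples contribute $O(|A|^2|S|)$.
\item A M\"obius map is determined by the images of three distinct points. Hence the nondegenerate part of the third moment is at most $2\sum_{\gamma\in\pi(S)}|\gamma Y\cap Y|$. Here $\pi:\SL_2(\F_p)\to\PSL_2(\F_p)$ is the quotient map, and $Y\subseteq\operatorname{PGL}_2(\F_p)$ is the set of maps sending $(0,1,\infty)$ to triples from $A$, so $|Y|\le|A|^3<p^{3(1-\varepsilon)}$.
\item Iterated Cauchy--Schwarz then only requires $\|\mu\|_2^2\le|A|^{-3-O(\sigma')}$ for the symmetrized convolution powers $\mu$.
\item The Balog--Szemer\'edi--Gowers plus Helfgott dichotomy gives a gain of $|A|^{-\kappa(\varepsilon,r)}$ at each doubling step. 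The three alternatives are ruled out as follows: a proper-subgroup coset by \eqref{eq:bourgain-nonconc} together with \eqref{eq:bourgain-size-S}; non-growth by Helfgott, since $|S|>|A|^{1/r}$; and near-fullness of $\SL_2$ by $|A|<p^{1-\varepsilon}$.
\end{itemize}
So $O_{\varepsilon,r}(1)$ steps suffice, and Borel cosets never enter.
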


The hypothesis \eqref{eq:bourgain-nonconc} is indispensable.  If $S$ is concentrated in a coset of a point stabilizer or a torus normalizer, the transformations may preserve a small projective configuration and produce many incidences.  Our main algebraic task is therefore not only to produce many distinct quotient transformations, but also to show that they avoid every such coset.

Later quantitative incidence theorems for M\"obius transformations and modular hyperbolae were developed by Shkredov, Rudnev--Wheeler and Warren--Wheeler \cite{Shkredov2021,RudnevWheeler2022,WarrenWheeler2023}.  Jing and Zou subsequently placed Bourgain's result in a broader group-action Szemer\'edi--Trotter framework \cite{JingZou2024}.  The qualitative power saving in \cref{thm:bourgain} is especially well suited to the present family because its subgroup nonconcentration can be proved uniformly.

\section{The M\"obius transition family}\label{sec:transitions}

Fix
\begin{equation}\label{eq:coeff-assumptions}
  \alpha,\beta,\gamma,\delta\in\F_p^\times,
  \qquad
  \Delta=\alpha\delta-\beta\gamma\ne0.
\end{equation}
For $a\in\F_p^\times$, define
\begin{equation}\label{eq:T-def}
  T_a(x)=a\frac{\alpha a+\beta x}{\gamma a+\delta x}.
\end{equation}
As a projective transformation, $T_a$ is represented by
\begin{equation}\label{eq:G-a}
  G_a=
  \begin{pmatrix}
    \beta a&\alpha a^2\\
    \delta&\gamma a
  \end{pmatrix}.
\end{equation}
Indeed,
\[
  \det G_a=(\beta\gamma-\alpha\delta)a^2=-\Delta a^2\ne0.
\]

For $a,b\in\F_p^\times$, put
\begin{equation}\label{eq:H-def}
  H_{a,b}=\frac abG_bG_a^{-1}.
\end{equation}
Since
\[
  \det(G_bG_a^{-1})=\frac{b^2}{a^2},
\]
we have $H_{a,b}\in\SL_2(\F_p)$.  Its projective action is $T_b\circ T_a^{-1}$.

\begin{lemma}\label{lem:H-formula}
For all $a,b\in\F_p^\times$,
\begin{equation}\label{eq:H-formula}
  H_{a,b}
  =\frac1\Delta
  \begin{pmatrix}
    \alpha\delta\,\dfrac ba-\beta\gamma
    &\alpha\beta(a-b)\\[3mm]
    \gamma\delta\,\dfrac{b-a}{ab}
    &\alpha\delta\,\dfrac ab-\beta\gamma
  \end{pmatrix}.
\end{equation}
Moreover, the map
\[
  (a,b)\longmapsto H_{a,b}
\]
is injective on $\{(a,b)\in(\F_p^\times)^2:a\ne b\}$.
\end{lemma}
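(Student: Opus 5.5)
The plan is to prove the formula by a direct $2\times2$ computation, and then to get injectivity by reading the parameters $a$ and $b$ off two of the entries of $H_{a,b}$.

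\emph{The formula.} Since $\det G_a=-\Delta a^2$, the adjugate formula gives
\[
  G_a^{-1}=\frac{-1}{\Delta a^2}
  \begin{pmatrix}\gamma a&-\alpha a^2\\ -\delta&\beta a\end{pmatrix}.
\]
I will multiply $G_b=\begin{pmatrix}\beta b&\alpha b^2\\ \delta&\gamma b\end{pmatrix}$ by the adjugate matrix. The four entries of that product are
\[
  \beta\gamma ab-\alpha\delta b^2,\qquad
  \alpha\beta ab(b-a),\qquad
  \gamma\delta(a-b),\qquad
  \beta\gamma ab-\alpha\delta a^2 .
\]
The overall scalar is
\[
  \frac ab\cdot\frac{-1}{\Delta a^2}=\frac{-1}{\Delta ab}.
\]
Multiplying each entry by this scalar should give exactly the four entries in \eqref{eq:H-formula}. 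For instance, the $(1,1)$ entry becomes $(\alpha\delta\,b/a-\beta\gamma)/\Delta$, and the $(1,2)$ entry becomes $\alpha\beta(a-b)/\Delta$. The remaining two entries are the same bookkeeping. That $H_{a,b}\in\SL_2(\F_p)$ and acts as $T_bT_a^{-1}$ was already recorded before the lemma, so only the entry formula needs checking.

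\emph{Injectivity.} Suppose $H_{a,b}=H_{a',b'}$ with $a\ne b$ and $a'\ne b'$.

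First I compare $(1,1)$ entries. Because $\alpha\delta\ne0$ and $\Delta\ne0$, the affine map $r\mapsto(\alpha\delta r-\beta\gamma)/\Delta$ is injective. Hence $r:=b/a=b'/a'$, and $r\ne1$.

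Next I compare $(1,2)$ entries. Because $\alpha\beta\ne0$, this gives $a-b=a'-b'$. Rewriting, $a(1-r)=a'(1-r)$. Since $r\ne1$, we get $a=a'$, and then $b=ra=b'$.

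The only point needing care is that the map is injective as a map into matrices, not merely into projective transformations. This is automatic here, because we compare actual matrix entries and the normalization by $a/b$ is already built into $H_{a,b}$. The off-diagonal hypothesis $a\ne b$ is used exactly once, to cancel the factor $1-r$. I expect no real obstacle beyond the sign bookkeeping in the matrix product.
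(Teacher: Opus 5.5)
Your proposal is correct and follows the paper's proof. The adjugate product entries and the scalar $-1/(\Delta ab)$ you wrote down check out and reproduce \eqref{eq:H-formula}. For injectivity, you recover $r=b/a$ from the $(1,1)$ entry and then $a$ from the $(1,2)$ entry $\alpha\beta a(1-r)/\Delta$ using $r\ne1$, exactly as the paper does.
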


\begin{proof}
The displayed formula follows by multiplying \eqref{eq:G-a} and its inverse.  To prove injectivity, write $r=b/a$.  The upper-left entry of $H_{a,b}$ is
\[
  \frac{\alpha\delta r-\beta\gamma}{\Delta},
\]
so it determines $r$, because $\alpha\delta\ne0$.  Since $a\ne b$, one has $r\ne1$, and the upper-right entry
\[
  \frac{\alpha\beta a(1-r)}{\Delta}
\]
then determines $a$.  Finally $b=ra$.
\end{proof}

For a set $V\subseteq\F_p^\times$, define
\begin{equation}\label{eq:S-V}
  \cS_V=\{H_{a,b}:a,b\in V,\ a\ne b\}\subseteq\SL_2(\F_p).
\end{equation}
By \cref{lem:H-formula}, if $m=|V|$, then
\begin{equation}\label{eq:S-size-exact}
  |\cS_V|=m(m-1).
\end{equation}
The exact quadratic size will verify \eqref{eq:bourgain-size-S}.  The next section verifies the more delicate condition \eqref{eq:bourgain-nonconc}.

\section{Escape from proper subgroups}\label{sec:escape}

The first lemma bounds the number of quotient transformations that can send a prescribed projective point to another prescribed point.  We work over $\overline{\F}_p$ because nonsplit tori are diagonalized over $\F_{p^2}$.

\begin{lemma}\label{lem:evaluation}
Let $V\subseteq\F_p^\times$ and $m=|V|$.  For all
\[
  \xi,\eta\in\Pone(\overline{\F}_p),
\]
one has
\begin{equation}\label{eq:evaluation-bound}
  \#\{(a,b)\in V^2:(T_b\circ T_a^{-1})(\xi)=\eta\}\le2m.
\end{equation}
\end{lemma}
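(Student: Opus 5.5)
The plan is to fix $a$ and show that, apart from at most one exceptional value of $a$ that contributes at most $m$ pairs, each $a$ contributes at most one $b$. That gives the bound $2m$. We work projectively over $\overline{\F}_p$, with $T_a$ acting through the matrix $G_a$ of \eqref{eq:G-a}.

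Fix $a\in V$ and set $\zeta=T_a^{-1}(\xi)\in\Pone(\overline{\F}_p)$. The condition then reads $T_b(\zeta)=\eta$, so we must count $b\in V$ with $G_b\cdot\zeta=\eta$, where
\[
  G_b\cdot\zeta=\frac{\beta b\zeta+\alpha b^2}{\delta\zeta+\gamma b}.
\]
For finite $\zeta$ and finite $\eta$ the equation becomes
\[
  \beta b\zeta+\alpha b^2=\eta(\delta\zeta+\gamma b),
\]
which is a quadratic in $b$ with leading coefficient $\alpha\neq0$. This is nontrivial, so it has at most two roots. That already gives the bound $2m$ for this case, summing two roots over each of the $m$ choices of $a$.

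The remaining cases are those where $\zeta$ or $\eta$ is $\infty$.
\begin{itemize}
  \item If $\zeta=\infty$, then $G_b\cdot\infty=\beta b/\delta$. This is injective in $b$, so there is at most one $b$.
  \item If $\zeta$ is finite and $\eta=\infty$, the condition is $\delta\zeta+\gamma b=0$, where the numerator cannot also vanish since $\det G_b\ne0$. This again gives at most one $b$.
  \item If both are infinite, $\beta b/\delta=\infty$ never happens.
\end{itemize}
The degenerate case of an identically vanishing polynomial never occurs, because $\alpha\ne0$ in the finite case and $\beta,\gamma,\delta\ne0$ in the others.

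Each $a$ therefore contributes at most two values of $b$, and summing over $a\in V$ gives $2m$. No step is a real obstacle. The only care needed is the projective bookkeeping: $\zeta$ may lie in $\Pone(\overline{\F}_p)\setminus\Pone(\F_p)$, but all the equations still make sense over $\overline{\F}_p$. We must also confirm that the leading coefficient is nonzero in every case, and this is exactly where the hypotheses $\alpha,\beta,\gamma,\delta\ne0$ from \eqref{eq:coeff-assumptions} enter.
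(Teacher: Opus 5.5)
Your proof is correct and is the paper's proof. You fix $a$, set $\zeta=T_a^{-1}(\xi)$, and note that $T_b(\zeta)=\eta$ is a genuine quadratic in $b$ with leading coefficient $\alpha\ne0$ when $\zeta,\eta$ are finite, and has at most one solution in the cases involving $\infty$. Summing at most two values of $b$ over $m$ choices of $a$ gives $2m$.

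The only blemish is your opening sentence about one exceptional $a$, with at most one $b$ for every other $a$. You never use it, and it is false in general, since the quadratic typically has two roots in $V$ for many $a$. Delete that sentence.
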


\begin{proof}
Fix $a\in V$ and put $z=T_a^{-1}(\xi)$.  We count $b\in V$ for which $T_b(z)=\eta$.

Suppose first that $z$ and $\eta$ are finite.  Clearing the denominator in \eqref{eq:T-def} gives
\begin{equation}\label{eq:b-quadratic}
  \alpha b^2+(\beta z-\eta\gamma)b-\eta\delta z=0.
\end{equation}
The leading coefficient is nonzero, so there are at most two possibilities for $b$.

If $z$ is finite and $\eta=\infty$, then $\gamma b+\delta z=0$, which has at most one solution.  If $z=\infty$ and $\eta$ is finite, then
\[
  T_b(\infty)=\frac{\beta}{\delta}b,
\]
so again there is at most one solution.  Finally, if $z=\eta=\infty$, there is no solution because $T_b(\infty)$ is finite.  Summing over the $m$ choices of $a$ proves the claim.
\end{proof}

We use the following standard consequence of Dickson's classification \cite{Dickson1901}; see also the subgroup discussion in \cite{Helfgott2008}.

\begin{proposition}\label{prop:dickson}
Let $p\ge5$ be prime.  Every proper subgroup of $\PSL_2(\F_p)$ is contained in one of the following:
\begin{enumerate}[label=(\roman*)]
  \item a Borel subgroup, which stabilizes a point of $\Pone(\F_p)$;
  \item the normalizer of a split or nonsplit torus, which stabilizes an unordered pair of points of $\Pone(\F_{p^2})$;
  \item an exceptional subgroup isomorphic to $A_4$, $S_4$ or $A_5$.
\end{enumerate}
\end{proposition}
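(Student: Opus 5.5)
The plan is to read the statement off Dickson's list of subgroups of $\PSL_2(\F_p)$ \cite{Dickson1901} and check that each entry lies in one of the three containers. For a prime $p\ge5$, Dickson's theorem says that every subgroup of $\PSL_2(\F_p)$ is isomorphic to one of the following:
\begin{enumerate}[label=(\alph*)]
  \item a cyclic group of order dividing $(p\pm1)/2$, or of order $p$;
  \item a dihedral group of order $2k$ with $k\mid(p\pm1)/2$;
  \item a semidirect product $C_p\rtimes C_k$ with $k\mid(p-1)/2$;
  \item $A_4$, $S_4$ or $A_5$;
  \item $\PSL_2(\F_{p'})$ or $\PGL_2(\F_{p'})$ for a subfield $\F_{p'}$.
\end{enumerate}
Because $\F_p$ has no proper subfields, case (e) produces only the whole group, so it does not occur for proper subgroups. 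Case (d) is item (iii) verbatim. It therefore remains to place types (a)--(c) inside Borel subgroups or torus normalizers.

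For the unipotent types I would argue as follows. An element of order $p$ is the image of a nonidentity unipotent matrix. Such a matrix has a unique eigenline, so it has a unique fixed point in $\Pone(\F_p)$. A group of type (c) has a normal subgroup $C_p$. The fixed point of the generator of $C_p$ is then permuted by the whole group. Since it is unique, it is fixed by the whole group. Hence the group lies in the stabilizer of a point of $\Pone(\F_p)$, which is a Borel subgroup; this is item (i). A plain $C_p$ is handled the same way.

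For the semisimple types, an element $h\ne1$ of order prime to $p$ is the image of a matrix whose characteristic polynomial has distinct roots. These roots lie in $\F_{p^2}$, so $h$ fixes exactly two points of $\Pone(\F_{p^2})$. Its centralizer is a (split or nonsplit) torus, and the stabilizer of that unordered pair is the torus normalizer.
\begin{itemize}
  \item For a cyclic group of order $k$, I take $h$ to be a generator; the group lies in the torus of $h$, which gives item (ii).
  \item For a dihedral group with $k\ge3$, the cyclic subgroup of index two is characteristic. Hence the whole group preserves the fixed pair of its generator, which again gives item (ii).
\end{itemize}

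I expect the Klein four-group $V_4$ (and the case $k=2$ in general) to be the only real obstacle, because its cyclic subgroups of order $2$ are not characteristic. Here I would use commutativity. Take one involution $h\in V_4$ with fixed pair $\{\xi_1,\xi_2\}$. Every other element $g$ of $V_4$ commutes with $h$, so $g$ maps fixed points of $h$ to fixed points of $h$. Therefore $g$ preserves the pair $\{\xi_1,\xi_2\}$, and $V_4$ is contained in the normalizer of the torus of $h$.

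Finally, I would note where the hypothesis $p\ge5$ enters: it is needed both for Dickson's list in the form above and for $\PSL_2(\F_p)$ to be simple. The small primes $p=2,3$ can be excluded from the later applications by adjusting constants.
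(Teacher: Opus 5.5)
Your argument is correct and takes essentially the paper's route. The paper gives no proof beyond citing Dickson's classification (plus a one-line remark that a torus fixes two points of $\Pone(\F_{p^2})$ and its normalizer preserves that pair), and you supply the fixed-point argument that converts Dickson's isomorphism-type list into containment in a Borel subgroup or a torus normalizer.

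One small slip: the centralizer of an involution in $\PSL_2(\F_p)$ is dihedral of order $p\pm1$, not a torus. You only ever use that $\langle h\rangle$ fixes both points of its fixed pair, so it lies in their pointwise stabilizer, which is a torus. Similarly, the dihedral case needs only normality of the cyclic index-two subgroup inside the given subgroup, not characteristicness, so $V_4$ is not actually a special case.
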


A split torus fixes two points in $\Pone(\F_p)$.  A nonsplit torus fixes two Galois-conjugate points in $\Pone(\F_{p^2})\setminus\Pone(\F_p)$.  In either case, its normalizer preserves the corresponding unordered pair.

\begin{proposition}\label{prop:subgroup-escape}
There is an absolute constant $C$ such that, for every prime $p\ge5$, every $V\subseteq\F_p^\times$ with $m=|V|$, every proper subgroup $K<\SL_2(\F_p)$, and every $g\in\SL_2(\F_p)$,
\begin{equation}\label{eq:subgroup-escape}
  |\cS_V\cap gK|\le Cm+C.
\end{equation}
In particular, for all sufficiently large $m$,
\begin{equation}\label{eq:subgroup-escape-power}
  |\cS_V\cap gK|<|\cS_V|^{3/4}.
\end{equation}
\end{proposition}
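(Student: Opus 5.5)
We pass to $\PSL_2(\F_p)$: the image of $K$ there is a proper subgroup $\bar K$, and the map $\cS_V\to\PSL_2(\F_p)$ is at most two-to-one. So it suffices to bound the number of $(a,b)\in V^2$ with $a\ne b$ and $T_bT_a^{-1}\in \bar g\bar K$. By \cref{lem:H-formula}, distinct such pairs give distinct elements of $\cS_V$, and this count controls $|\cS_V\cap gK|$ up to a factor $2$. By \cref{prop:dickson}, $\bar K$ lies in one of three types of subgroup, and I would treat each type separately, always reducing membership in a coset to a statement about images of points, so that \cref{lem:evaluation} applies.

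\emph{Borel case.} Here $\bar K$ fixes a point $\xi\in\Pone(\F_p)$. Every element $h$ of the coset $\bar g\bar K$ then satisfies $h(\xi)=\bar g(\xi)=:\eta$. Hence every pair $(a,b)$ with $T_bT_a^{-1}\in\bar g\bar K$ satisfies $(T_b\circ T_a^{-1})(\xi)=\eta$, and \cref{lem:evaluation} gives at most $2m$ pairs.

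\emph{Torus normalizer case.} Here $\bar K$ preserves an unordered pair $\{\xi_1,\xi_2\}\subseteq\Pone(\F_{p^2})$. Every $h\in\bar g\bar K$ maps $\xi_1$ into the two-element set $\{\bar g\xi_1,\bar g\xi_2\}$. So each pair $(a,b)$ satisfies $(T_b\circ T_a^{-1})(\xi_1)=\eta$ for one of two targets $\eta$. Since \cref{lem:evaluation} is stated over $\overline{\F}_p$, this gives at most $4m$ pairs.

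\emph{Exceptional case.} Here $|\bar K|\le60$, so $|\bar g\bar K|\le 60$, and by injectivity these elements lift to at most $120$ elements of $\cS_V$. This case produces the additive constant $C$.

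Taking $C$ to be, say, $8+120$ proves \eqref{eq:subgroup-escape}. Since $|\cS_V|=m(m-1)$ by \eqref{eq:S-size-exact}, we have $|\cS_V|^{3/4}\asymp m^{3/2}$, which exceeds $Cm+C$ once $m$ is large; this gives \eqref{eq:subgroup-escape-power}.

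The main obstacle is the torus normalizer case. The fixed points may lie in $\F_{p^2}$, and the evaluation lemma must hold there. It is already stated over $\overline{\F}_p$, but I would recheck that its case analysis, with $z=T_a^{-1}(\xi)$ now possibly non-rational, is unaffected: the quadratic \eqref{eq:b-quadratic} still has nonzero leading coefficient $\alpha$, so the bound of two values of $b$ survives. A secondary point is that the correspondence between subgroups of $\SL_2$ and $\PSL_2$ needs care when $-I\notin K$; the cosets still project into a coset of the proper subgroup $\bar K$, unless $\bar K$ is all of $\PSL_2$. That cannot happen, because the only subgroup of $\SL_2(\F_p)$ surjecting onto $\PSL_2(\F_p)$ is $\SL_2(\F_p)$ itself, since $\SL_2(\F_p)$ is perfect for $p\ge5$.
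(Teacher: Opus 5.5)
Your proof is correct and follows essentially the same route as the paper. Both arguments project to a proper subgroup of $\PSL_2(\F_p)$ using perfectness, split into the three cases of \cref{prop:dickson}, and apply \cref{lem:evaluation} to one target point in the Borel case and to two target points in the torus-normalizer case, with the exceptional case supplying the additive constant. Your factor-$2$ loss is harmless but unnecessary: injectivity of $(a,b)\mapsto H_{a,b}$ already makes $|\cS_V\cap gK|$ equal to the number of admissible pairs $(a,b)$, which is how the paper counts.
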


\begin{proof}
Let
\[
  \pi:\SL_2(\F_p)\longrightarrow\PSL_2(\F_p)
\]
be the quotient map.  The group $\SL_2(\F_p)$ is perfect for $p\ge5$.  If $\pi(K)=\PSL_2(\F_p)$, then $K\{\pm I\}=\SL_2(\F_p)$ and hence $[\SL_2(\F_p):K]\le2$.  A proper subgroup would therefore have index two and be normal, producing a nontrivial abelian quotient of a perfect group.  This is impossible, so $\pi(K)$ is proper.

By \cref{prop:dickson}, $\pi(K)$ is contained in one of the three listed types.  We treat them separately.

If $\pi(K)$ is contained in a Borel subgroup stabilizing $\xi\in\Pone(\F_p)$, then every projective transformation in $\pi(gK)$ sends $\xi$ to the fixed point $\pi(g)\xi$.  By \cref{lem:evaluation}, at most $2m$ ordered parameter pairs $(a,b)\in V^2$ have this property.  Since $(a,b)\mapsto H_{a,b}$ is injective off the diagonal, this bounds $|\cS_V\cap gK|$ by $2m$.

Suppose next that $\pi(K)$ lies in a torus normalizer.  It preserves an unordered pair
\[
  \{\xi_1,\xi_2\}\subseteq\Pone(\F_{p^2}).
\]
Every element of the coset $\pi(gK)$ sends $\xi_1$ to one of $\pi(g)\xi_1$ and $\pi(g)\xi_2$.  Applying \cref{lem:evaluation} to the two possible target points gives at most $4m$ parameter pairs.

Finally, an exceptional projective subgroup has order at most $60$.  Its inverse image in $\SL_2(\F_p)$ has order at most $120$, so any of its cosets meets $\cS_V$ in at most $120$ elements.

This proves \eqref{eq:subgroup-escape}.  The last assertion follows from \eqref{eq:S-size-exact}, since $m(m-1)\asymp m^2$.
\end{proof}

\begin{remark}\label{rem:nonzero-coefficients}
The assumptions $\alpha\beta\gamma\delta\ne0$ have a structural role.  They ensure that the equation in \eqref{eq:b-quadratic} is genuinely quadratic and that the quotient family cannot have a large common fixed-point or fixed-pair degeneration.  Some vanishing-coefficient cases can be treated separately.  The factorial transition is an important affine degeneration, where a point-line theorem gives a stronger explicit exponent \cite{HuFactorials2026}.
\end{remark}

\section{The abstract value-set theorem}\label{sec:abstract-proof}

We now prove \cref{thm:abstract-intro}.  It is useful first to record the transition-count formulation precisely.

For $V\subseteq\F_p^\times$, let $\cN_T(V)$ be as in \eqref{eq:transition-count-intro}.  If $L\le\min\{\cN_T(V),p\}$, choose a set
\begin{equation}\label{eq:E-selected}
  \cE\subseteq\{(x,a)\in V^2:T_a(x)\in V\}
\end{equation}
of exactly $L$ distinct ordered pairs.  For $x\in V$, put
\begin{equation}\label{eq:R-E}
  R_{\cE}(x)=\#\{a\in V:(x,a)\in\cE\}.
\end{equation}
Then
\[
  \sum_{x\in V}R_{\cE}(x)=L.
\]
The inequality $L\le |V|^2$ already gives the square-root bound $|V|\ge L^{1/2}$.  The role of the second moment below is to replace this tautological estimate by a fixed power improvement.

\begin{proof}[Proof of \cref{thm:abstract-intro}]
Write $m=|V|$.  By Cauchy--Schwarz,
\begin{equation}\label{eq:CS-main}
  L^2
  \le m\sum_{x\in V}R_{\cE}(x)^2
  \le m\sum_{a,b\in V}M(a,b),
\end{equation}
where
\begin{equation}\label{eq:M-ab}
  M(a,b)
  =\#\{x\in V:T_a(x)\in V,\ T_b(x)\in V\}.
\end{equation}
The diagonal terms contribute at most $m^2$.

Suppose $a\ne b$.  Put $u=T_a(x)$.  Since $T_a$ is a projective automorphism, the substitution is injective, and
\[
  T_b(x)=(T_b\circ T_a^{-1})(u)=H_{a,b}\cdot u.
\]
Consequently,
\begin{equation}\label{eq:M-incidence}
  \sum_{\substack{a,b\in V\\a\ne b}}M(a,b)
  \le I(V,\cS_V).
\end{equation}

Fix $\varepsilon=1/4$ and $r=2$ in \cref{thm:bourgain}, and let $\sigma>0$ be the resulting saving.  We may assume $0<\sigma\le1$.

If $m\ge p^{3/4}$, then $L\le p$ gives
\begin{equation}\label{eq:large-m-case}
  m\ge L^{3/4}.
\end{equation}
Suppose therefore that $m<p^{3/4}$.  For sufficiently large $m$, \eqref{eq:S-size-exact} gives
\[
  \log m<2\log|\cS_V|,
\]
and \cref{prop:subgroup-escape} gives
\[
  |\cS_V\cap gK|<|\cS_V|^{3/4}
\]
for every proper subgroup $K$ and every coset.  Thus all hypotheses of \cref{thm:bourgain} are satisfied, and
\begin{equation}\label{eq:I-bound}
  I(V,\cS_V)
  \ll m^{1-\sigma}|\cS_V|
  \ll m^{3-\sigma}.
\end{equation}
Combining \eqref{eq:CS-main}, \eqref{eq:M-incidence} and \eqref{eq:I-bound}, we obtain
\begin{equation}\label{eq:L-bound}
  L^2\ll m(m^2+m^{3-\sigma})\ll m^{4-\sigma}.
\end{equation}
Hence
\begin{equation}\label{eq:m-from-L}
  m\gg L^{2/(4-\sigma)}.
\end{equation}
Set
\begin{equation}\label{eq:eta-def}
  \eta=\frac{2}{4-\sigma}-\frac12
  =\frac{\sigma}{2(4-\sigma)}>0.
\end{equation}
The exponent in \eqref{eq:large-m-case} is stronger than $1/2+\eta$.  The finitely many cases in which $p$ or $m$ is below the thresholds in Bourgain's theorem are absorbed by decreasing the absolute implied constant.  Taking $L=\min\{\cN_T(V),p\}$ proves \eqref{eq:abstract-intro}.
\end{proof}

\section{Sequences with M\"obius ratio dynamics}\label{sec:ratio}

The abstract theorem applies whenever a sequence supplies many distinct transitions of the form \eqref{eq:T-def}.

\begin{corollary}\label{cor:ratio-dynamics}
Let $u_0,u_1,\ldots,u_K\in\F_p^\times$, and put
\[
  \rho_n=\frac{u_{n+1}}{u_n}
  \qquad(0\le n<K).
\]
Assume that for $0\le n\le K-2$,
\begin{equation}\label{eq:rho-recurrence}
  \rho_{n+1}=\frac{\alpha\rho_n+\beta}{\gamma\rho_n+\delta},
\end{equation}
where \eqref{eq:coeff-assumptions} holds.  Let $L$ be the number of distinct ordered pairs
\[
  (u_n,u_{n+1}),
  \qquad 0\le n\le K-2.
\]
Then
\begin{equation}\label{eq:ratio-corollary}
  |\{u_0,u_1,\ldots,u_K\}|
  \gg \min\{L,p\}^{1/2+\eta},
\end{equation}
where $\eta>0$ is the absolute constant in \cref{thm:abstract-intro}.
\end{corollary}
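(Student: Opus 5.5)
We apply \cref{thm:abstract-intro} with $V=\{u_0,\ldots,u_K\}\subseteq\F_p^\times$ and the same quadruple $(\alpha,\beta,\gamma,\delta)$. It then suffices to show $\cN_T(V)\ge L$. Since $|V|\ge c\min\{\cN_T(V),p\}^{1/2+\eta}$ and $\min\{\cdot,p\}$ is monotone, we would get $|V|\ge c\min\{L,p\}^{1/2+\eta}$, which is \eqref{eq:ratio-corollary}.

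To lower-bound $\cN_T(V)$, we exhibit one internal transition for each index. Fix $0\le n\le K-2$ and put $x=u_n$, $a=u_{n+1}$, both in $V$. Then $\rho_n=a/x$ and
\[
  u_{n+2}=u_{n+1}\rho_{n+1}=a\,\varphi(a/x)=a\frac{\alpha a+\beta x}{\gamma a+\delta x}=T_a(x),
\]
where we multiplied numerator and denominator of $\varphi(a/x)$ by $x\ne0$. We must check that $\gamma a+\delta x\ne0$. Because \eqref{eq:rho-recurrence} holds as an identity in $\F_p$ with finite value $\rho_{n+1}$, we have $\gamma\rho_n+\delta\ne0$ (the recurrence is only meaningful when the denominator is nonzero). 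Multiplying by $x$ gives $\gamma a+\delta x\ne0$. Thus $(x,a)=(u_n,u_{n+1})$ is counted by $\cN_T(V)$, since $T_a(x)=u_{n+2}\in V$.

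Distinct ordered pairs $(u_n,u_{n+1})$ give distinct elements $(x,a)$ of the set defining $\cN_T(V)$. Hence $\cN_T(V)\ge L$, and the conclusion follows.

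The only delicate point is the denominator condition. If one worries that the hypothesis might allow $\gamma\rho_n+\delta=0$ (with $\rho_{n+1}=\infty$), this is excluded because $u_{n+2}\in\F_p^\times$ forces $\rho_{n+1}$ to be finite and nonzero. Beyond that, the argument is bookkeeping: the implied constant is the $c$ of \cref{thm:abstract-intro}, and $\eta$ is the same absolute constant.
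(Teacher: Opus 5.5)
Your proposal is correct and follows the paper's proof: rewrite the ratio recurrence as $u_{n+2}=T_{u_{n+1}}(u_n)$, observe that each distinct adjacent pair $(u_n,u_{n+1})$ is an internal transition so that $\cN_T(V)\ge L$, and invoke \cref{thm:abstract-intro}. Your explicit check that $\gamma u_{n+1}+\delta u_n\ne0$ is correct (it holds because $\rho_{n+1}=u_{n+2}/u_{n+1}$ is finite), and it is a detail the paper leaves implicit.
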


\begin{proof}
Equation \eqref{eq:rho-recurrence} gives
\[
  u_{n+2}
  =u_{n+1}\frac{\alpha u_{n+1}+\beta u_n}
  {\gamma u_{n+1}+\delta u_n}
  =T_{u_{n+1}}(u_n).
\]
Thus every distinct adjacent pair counted by $L$ is an internal transition for the value set $V=\{u_0,\ldots,u_K\}$.  Apply \cref{thm:abstract-intro}.
\end{proof}

A broad source of ratio dynamics is a linear-fractional function of the index.

\begin{proposition}\label{prop:hypergeometric-ratio}
Let
\[
  f(t)=\frac{At+B}{Ct+D},
  \qquad AD-BC\ne0.
\]
Then
\[
  \varphi=f\circ(t\mapsto t+1)\circ f^{-1}
\]
is represented, up to a nonzero scalar, by
\begin{equation}\label{eq:phi-conjugate}
  \begin{pmatrix}
    AD-AC-BC&A^2\\
    -C^2&AC+AD-BC
  \end{pmatrix}.
\end{equation}
The determinant of this matrix is $(AD-BC)^2$.
Consequently, if a nonzero sequence satisfies
\begin{equation}\label{eq:hypergeometric-ratio}
  \frac{u_{n+1}}{u_n}=f(n),
\end{equation}
then its consecutive ratios obey a M\"obius recurrence.  Whenever all four entries in \eqref{eq:phi-conjugate} are nonzero modulo $p$, \cref{cor:ratio-dynamics} applies.
\end{proposition}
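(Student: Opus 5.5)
We will prove \cref{prop:hypergeometric-ratio} by a direct matrix computation in $\GL_2$, using the fact that composition of M\"obius maps corresponds to matrix multiplication up to scalars. Represent $f$ by $F=\begin{pmatrix}A&B\\C&D\end{pmatrix}$ and the shift $t\mapsto t+1$ by $S=\begin{pmatrix}1&1\\0&1\end{pmatrix}$. Since $\det F=AD-BC\ne0$, we may replace $F^{-1}$ by the adjugate $F^{\mathrm{adj}}=\begin{pmatrix}D&-B\\-C&A\end{pmatrix}$, which differs from $F^{-1}$ only by the nonzero scalar $(AD-BC)^{-1}$. Thus $\varphi$ is represented by $F S F^{\mathrm{adj}}$.

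The key computation proceeds in two multiplications. First,
\[
  FS=\begin{pmatrix}A&A+B\\C&C+D\end{pmatrix}.
\]
Multiplying this on the right by $F^{\mathrm{adj}}$ gives the entries:
\begin{itemize}
  \item upper-left: $AD-(A+B)C=AD-AC-BC$;
  \item upper-right: $-AB+(A+B)A=A^2$;
  \item lower-left: $CD-(C+D)C=-C^2$;
  \item lower-right: $-BC+(C+D)A=AC+AD-BC$.
\end{itemize}
This is exactly \eqref{eq:phi-conjugate}. The determinant follows from multiplicativity: $\det F\cdot\det S\cdot\det F^{\mathrm{adj}}=(AD-BC)\cdot1\cdot(AD-BC)$, so it equals $(AD-BC)^2$.

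For the consequence, suppose $\rho_n=u_{n+1}/u_n=f(n)$. Then
\[
  \rho_{n+1}=f(n+1)=f\bigl(f^{-1}(\rho_n)+1\bigr)=\varphi(\rho_n),
\]
whenever these expressions are defined. Here $f^{-1}(\rho_n)=n$ holds because $f$ is a bijection of $\Pone(\F_p)$, and $\rho_n$ is a finite nonzero value since $u_n,u_{n+1}\in\F_p^\times$. We then take $(\alpha,\beta,\gamma,\delta)$ to be the four entries of \eqref{eq:phi-conjugate}. The determinant condition $\Delta\ne0$ is automatic from the determinant identity, so the only remaining hypothesis of \eqref{eq:coeff-assumptions} is that all four entries are nonzero, which is exactly the stated proviso. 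Under that proviso, \cref{cor:ratio-dynamics} applies.

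The main obstacle is bookkeeping rather than substance. One must handle the projective conventions: the argument uses $f^{-1}(f(n))=n$ in $\Pone(\F_p)$, and one must make sure that neither $n$ nor $n+1$ is a pole of $f$ within the range considered. Since $u_n\ne0$ forces $f(n)$ to be finite and nonzero, the recurrence $\rho_{n+1}=\varphi(\rho_n)$ holds as an identity in $\F_p$ on the whole index range, and no separate analysis of poles is needed there.
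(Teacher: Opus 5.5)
Your proof is correct and follows the paper's argument: you represent $f$ and the shift by matrices, conjugate (using the adjugate in place of $F^{-1}$, which only changes the matrix by a nonzero scalar), and deduce $\rho_{n+1}=f(n+1)=\varphi(f(n))=\varphi(\rho_n)$. Your explicit entry-by-entry computation, the determinant check, and the remark that $\Delta=(AD-BC)^2\ne0$ holds automatically (so only the four-entry proviso is needed for \cref{cor:ratio-dynamics}) fill in details that the paper summarizes as ``multiplication gives.''
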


\begin{proof}
Represent $f$ and the translation $t\mapsto t+1$ by
\[
  F=\begin{pmatrix}A&B\\C&D\end{pmatrix},
  \qquad
  U=\begin{pmatrix}1&1\\0&1\end{pmatrix}.
\]
Then $\varphi$ is represented by $FUF^{-1}$.  Multiplication gives \eqref{eq:phi-conjugate}.  The final assertion follows from
\[
  \rho_{n+1}=f(n+1)=\varphi(f(n))=\varphi(\rho_n).
\]
\end{proof}

This proposition includes many fixed-parameter hypergeometric products.  It also clarifies why the transformation coefficients in the next two sections are simple: they are obtained by conjugating a translation.

\section{Fixed rows of Pascal's triangle}\label{sec:binomial}

For a prime $p$ and $0\le N<p$, define
\begin{equation}\label{eq:B-Np}
  B_{N,p}=\left\{\binom Nn\bmod p:0\le n\le N\right\}.
\end{equation}
When $N<p$, every element in the row is nonzero modulo $p$.  The digital distribution results in \cite{GarfieldWilf1992,BarbolosiGrabner1996,BaratGrabner2001} concern rows of arbitrary size and the multiplicities of residue classes throughout Pascal's triangle.  Here the restriction $N<p$ removes the zero entries and exposes a local transition structure inside one row.

Let
\[
  u_n=\binom Nn,
  \qquad 0\le n\le N.
\]
Then
\begin{equation}\label{eq:binomial-ratio}
  \rho_n=\frac{u_{n+1}}{u_n}=\frac{N-n}{n+1}.
\end{equation}
The next identity is the central algebraic observation for a fixed row.

\begin{lemma}\label{lem:binomial-ratio}
For $0\le n\le N-2$,
\begin{equation}\label{eq:binomial-mobius}
  \rho_{n+1}=\frac{N\rho_n-1}{\rho_n+N+2}.
\end{equation}
Equivalently,
\begin{equation}\label{eq:binomial-transition}
  u_{n+2}
  =u_{n+1}\frac{Nu_{n+1}-u_n}
  {u_{n+1}+(N+2)u_n}.
\end{equation}
\end{lemma}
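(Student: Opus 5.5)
The identity \eqref{eq:binomial-mobius} will be obtained from \cref{prop:hypergeometric-ratio}, or checked directly. Since $\rho_n=f(n)$ with
\[
  f(t)=\frac{-t+N}{t+1},
  \qquad (A,B,C,D)=(-1,N,1,1),
\]
the conjugated matrix \eqref{eq:phi-conjugate} has entries
\[
  AD-AC-BC=-1+1-N=-N,\qquad A^2=1,\qquad -C^2=-1,\qquad AC+AD-BC=-1-1-N=-(N+2).
\]
Multiplying this matrix by $-1$ gives $\begin{pmatrix}N&-1\\1&N+2\end{pmatrix}$, which is exactly the map in \eqref{eq:binomial-mobius}. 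We then have $\rho_{n+1}=f(n+1)=\varphi(f(n))=\varphi(\rho_n)$.

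As an independent check, I will substitute $\rho_n=(N-n)/(n+1)$ into the right-hand side and clear the common denominator $n+1$:
\[
  \frac{N(N-n)-(n+1)}{(N-n)+(N+2)(n+1)}
  =\frac{N^2-Nn-n-1}{Nn+2N+n+2}.
\]
The numerator factors as $(N+1)(N-n-1)$ and the denominator as $(N+1)(n+2)$. The quotient is therefore $(N-n-1)/(n+2)=\rho_{n+1}$.

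One denominator needs care. The cancellation of $N+1$ and the meaningfulness of the expressions require $n+1$, $n+2$ and $N+1$ to be nonzero modulo $p$. In the intended regime $N<p$ with $0\le n\le N-2$, all three are integers in $[1,p-1]$ except possibly $N+1=p$. I will handle the computation as an identity of rational functions in $n$ over $\mathbb{Q}$ and then reduce modulo $p$. Alternatively, I will note that \eqref{eq:binomial-mobius} is a polynomial identity after cross-multiplying, namely
\[
  (N-n-1)\bigl[(N-n)+(N+2)(n+1)\bigr]=(n+2)\bigl[N(N-n)-(n+1)\bigr],
\]
which holds for every $N$. For $N=p-1$ both sides vanish, and the lemma is only needed for $N\le p-3$ anyway.

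Finally, \eqref{eq:binomial-transition} follows by writing $\rho_n=u_{n+1}/u_n$ and multiplying numerator and denominator by $u_n$, using $u_{n+2}=u_{n+1}\rho_{n+1}$. The entries $u_n$ are nonzero because $N<p$. The only mildly delicate point is the denominator and degenerate-$p$ bookkeeping; the algebra itself is routine.
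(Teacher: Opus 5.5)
Your proposal is correct and is essentially the paper's argument: the paper solves $\rho_n=(N-n)/(n+1)$ for $n$ and substitutes into $\rho_{n+1}=(N-n-1)/(n+2)$, which is exactly the conjugation $f\circ(t\mapsto t+1)\circ f^{-1}$ you compute via \cref{prop:hypergeometric-ratio}, and your factorization check is the same computation run backwards. Your caveat that $N+1\not\equiv 0\pmod p$ is needed is apt, since the paper's division by $\rho_n+1$ tacitly needs it too; note, however, that at $N=p-1$ only the cross-multiplied identity survives, because there $\rho_n\equiv-1$ and the right-hand side of \eqref{eq:binomial-mobius} is literally $0/0$, so the lemma should be read for $N\le p-2$.
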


\begin{proof}
Solving \eqref{eq:binomial-ratio} for $n$ gives
\[
  n=\frac{N-\rho_n}{\rho_n+1}.
\]
Substitution into
\[
  \rho_{n+1}=\frac{N-n-1}{n+2}
\]
gives \eqref{eq:binomial-mobius}.  Multiplying by $u_{n+1}$ gives \eqref{eq:binomial-transition}.
\end{proof}

For \eqref{eq:binomial-mobius}, the coefficients are
\begin{equation}\label{eq:binomial-coefficients}
  \alpha=N,
  \qquad \beta=-1,
  \qquad \gamma=1,
  \qquad \delta=N+2,
\end{equation}
and
\begin{equation}\label{eq:binomial-determinant}
  \alpha\delta-\beta\gamma=(N+1)^2.
\end{equation}
Thus the nondegeneracy assumptions hold precisely for
\[
  1\le N\le p-3.
\]

\begin{proof}[Proof of \cref{thm:binomial-intro}]
Assume first that $1\le N\le p-3$.  The ratios in \eqref{eq:binomial-ratio} are pairwise distinct.  Indeed, if $\rho_n=\rho_m$, then cross-multiplication gives
\[
  (N+1)(m-n)=0\quad\text{in }\F_p.
\]
Since $N+1\ne0$ and $0\le m,n<p$, one has $m=n$.

Consequently, the ordered pairs
\[
  (u_n,u_{n+1}),
  \qquad 0\le n\le N-2,
\]
are distinct.  There are $N-1$ of them, and \cref{cor:ratio-dynamics}, together with \eqref{eq:binomial-coefficients}--\eqref{eq:binomial-determinant}, gives
\[
  |B_{N,p}|\gg (N-1)^{1/2+\eta}.
\]
After adjusting the constant for $N\le2$, this is \eqref{eq:binomial-intro}.

For $N=p-2$, the standard negative-binomial identity gives
\[
  \binom{p-2}{n}\equiv\binom{-2}{n}=(-1)^n(n+1)\pmod p.
\]
As $n+1$ runs from $1$ to $p-1$, the values are exactly the odd representatives
\[
  1,3,5,\ldots,p-2,
\]
each occurring twice.  This proves \eqref{eq:pminus2-intro}.

Finally,
\[
  \binom{p-1}{n}\equiv(-1)^n\pmod p,
\]
which proves \eqref{eq:pminus1-intro}.
\end{proof}

\begin{remark}
The degeneration at $N=p-1$ is genuine.  In the M\"obius recurrence, the determinant is $(N+1)^2$, which vanishes exactly on that row; the value set then collapses to two elements.  The row $N=p-2$ has a different coefficient degeneration but remains large for the elementary reason displayed above.
\end{remark}

\section{Catalan and central binomial values}\label{sec:catalan}

Let
\[
  C_n=\frac1{n+1}\binom{2n}{n}
  \qquad\text{and}\qquad
  M_n=\binom{2n}{n}.
\]
The congruential distribution of these sequences has been studied through character sums, solution counts and automata; see \cite{GaraevLucaShparlinski2006,GaraevLucaShparlinski2007,Burns2017}.  The earlier covering result of Garaev--Luca--Shparlinski concerns an initial segment of length at most $p^{13/2}(\log p)^6$, whereas we consider the much shorter range
\[
  0\le n\le\frac{p-1}{2}.
\]
In this range $2n<p$, so both $C_n$ and $M_n$ are nonzero modulo $p$.

\subsection{Catalan numbers}

The adjacent ratio is
\begin{equation}\label{eq:catalan-ratio}
  \rho_n=\frac{C_{n+1}}{C_n}
  =\frac{2(2n+1)}{n+2}.
\end{equation}

\begin{lemma}\label{lem:catalan-ratio}
For every admissible $n$,
\begin{equation}\label{eq:catalan-mobius}
  \rho_{n+1}=\frac{2\rho_n+16}{-\rho_n+10}.
\end{equation}
Equivalently,
\begin{equation}\label{eq:catalan-transition}
  C_{n+2}
  =C_{n+1}\frac{2C_{n+1}+16C_n}{-C_{n+1}+10C_n}.
\end{equation}
\end{lemma}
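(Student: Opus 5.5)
The plan is to imitate the proof of \cref{lem:binomial-ratio}. I would invert the Möbius expression \eqref{eq:catalan-ratio}, which writes $\rho_n$ as a fractional-linear function of $n$, so that $n$ is expressed in terms of $\rho_n$. Then I would substitute into the corresponding formula for $\rho_{n+1}$. The only real care needed is to check that no denominator vanishes modulo $p$ in the admissible range. For this I will take "admissible" to mean that $C_n,C_{n+1},C_{n+2}$ all lie in the block $0\le n\le (p-1)/2$, and I use $p\ge7$.

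First, from $\rho_n(n+2)=2(2n+1)$ we get $n(\rho_n-4)=2-2\rho_n$. Here $\rho_n\ne4$, since $\rho_n=4$ would force $4n+2=4n+8$, that is $6\equiv0\pmod p$, which is impossible for $p\ge7$. Hence
\[
  n=\frac{2-2\rho_n}{\rho_n-4}.
\]
Next, by \eqref{eq:catalan-ratio} with $n+1$ in place of $n$,
\[
  \rho_{n+1}=\frac{2(2n+3)}{n+3}.
\]
Substituting the expression for $n$ gives
\[
  2n+3=\frac{-\rho_n-8}{\rho_n-4},\qquad n+3=\frac{\rho_n-10}{\rho_n-4}.
\]
Therefore $\rho_{n+1}=2(-\rho_n-8)/(\rho_n-10)$, which is \eqref{eq:catalan-mobius}.

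The denominator $\rho_n-10$ is nonzero because $n+3\not\equiv0\pmod p$ in the range considered. Similarly $n+2\not\equiv0$, so $\rho_n$ is defined, and $C_n\ne0$ since $2n<p$. This nonvanishing bookkeeping is the only potential obstacle, and it is routine.

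For \eqref{eq:catalan-transition}, I would write $C_{n+2}=C_{n+1}\rho_{n+1}$ and insert $\rho_n=C_{n+1}/C_n$ into \eqref{eq:catalan-mobius}. Multiplying numerator and denominator by $C_n\ne0$ then gives
\[
  C_{n+2}=C_{n+1}\,\frac{2C_{n+1}+16C_n}{-C_{n+1}+10C_n}.
\]
The new denominator $-C_{n+1}+10C_n=C_n(10-\rho_n)$ is nonzero by the previous step.

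Finally, I would record for later use the coefficients $(\alpha,\beta,\gamma,\delta)=(2,16,-1,10)$ and their determinant
\[
  \alpha\delta-\beta\gamma=20+16=36=6^2.
\]
All four coefficients and the determinant are nonzero modulo $p$ for $p\ge7$, so \eqref{eq:coeff-assumptions} holds and \cref{cor:ratio-dynamics} will be applicable.
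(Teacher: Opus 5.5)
Your proof is correct and is the direct-substitution route that the paper itself offers as its alternative argument; the paper's primary justification instead reads the coefficients off the conjugation formula \eqref{eq:phi-conjugate} of \cref{prop:hypergeometric-ratio} with $f(t)=(4t+2)/(t+2)$. Your explicit nonvanishing checks ($\rho_n\ne4$, $\rho_n\ne10$ via $n+3\not\equiv0$, and $C_n\ne0$) are a useful supplement that the paper leaves implicit.
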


\begin{proof}
This is \cref{prop:hypergeometric-ratio} with
\[
  f(t)=\frac{4t+2}{t+2}.
\]
Alternatively, solving \eqref{eq:catalan-ratio} for $n$ and substituting into the formula for $\rho_{n+1}$ gives \eqref{eq:catalan-mobius} directly.
\end{proof}

The coefficient quadruple is
\begin{equation}\label{eq:catalan-coefficients}
  (\alpha,\beta,\gamma,\delta)=(2,16,-1,10),
  \qquad
  \alpha\delta-\beta\gamma=36.
\end{equation}
It is nondegenerate for every prime $p\ge7$.

\subsection{Central binomial coefficients}

For $M_n=\binom{2n}{n}$,
\begin{equation}\label{eq:central-ratio}
  \tau_n=\frac{M_{n+1}}{M_n}
  =\frac{2(2n+1)}{n+1}.
\end{equation}

\begin{lemma}\label{lem:central-ratio}
For every admissible $n$,
\begin{equation}\label{eq:central-mobius}
  \tau_{n+1}=\frac{2\tau_n-16}{\tau_n-6}.
\end{equation}
Equivalently,
\begin{equation}\label{eq:central-transition}
  M_{n+2}
  =M_{n+1}\frac{2M_{n+1}-16M_n}{M_{n+1}-6M_n}.
\end{equation}
\end{lemma}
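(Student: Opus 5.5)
Both displays follow from \eqref{eq:central-ratio} by eliminating the index $n$, exactly as in \cref{lem:catalan-ratio}. The most economical route is \cref{prop:hypergeometric-ratio} with
\[
  f(t)=\frac{4t+2}{t+1},
  \qquad (A,B,C,D)=(4,2,1,1),
  \qquad AD-BC=2 .
\]
Substituting into \eqref{eq:phi-conjugate}, the conjugated matrix has entries
\begin{align*}
  AD-AC-BC&=4-4-2=-2,\\
  A^2&=16,\\
  -C^2&=-1,\\
  AC+AD-BC&=4+4-2=6 .
\end{align*}
After multiplying by the scalar $-1$ this becomes $\begin{pmatrix}2&-16\\1&-6\end{pmatrix}$, whose M\"obius action is $t\mapsto(2t-16)/(t-6)$. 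Since $\tau_{n+1}=f(n+1)=\varphi(f(n))=\varphi(\tau_n)$, this gives \eqref{eq:central-mobius}. As a check, the determinant is $-12+16=4=(AD-BC)^2$, consistent with the proposition.

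I would also record the direct verification, which avoids any scalar bookkeeping. Solving $\tau(n+1)=4n+2$ gives
\[
  n=\frac{\tau-2}{4-\tau}.
\]
Substituting into $\tau_{n+1}=2(2n+3)/(n+2)$, we have $2n+3=(8-\tau)/(4-\tau)$ and $n+2=(6-\tau)/(4-\tau)$. The factor $4-\tau$ cancels, leaving
\[
  \tau_{n+1}=\frac{2(8-\tau)}{6-\tau}=\frac{2\tau-16}{\tau-6}.
\]

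Then \eqref{eq:central-transition} follows by writing $\tau_n=M_{n+1}/M_n$, clearing the inner denominators $M_n$, and using $M_{n+2}=\tau_{n+1}M_{n+1}$.

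Admissibility is the only point requiring care, since all divisions must be legitimate in $\F_p$. For $0\le n\le (p-1)/2$ (with $n+2$ still in range where needed) and $p\ge7$, we have $M_n\ne0$. We also need $4-\tau_n\ne0$, which is equivalent to $4(n+1)\ne 4n+2$, i.e.\ $2\ne0$. Finally $\tau_n-6\ne0$ is equivalent to $2n\ne -4$, i.e.\ $n+2\not\equiv0\pmod p$, which holds since $n+2<p$.

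For the later application, note that the coefficient quadruple is
\[
  (\alpha,\beta,\gamma,\delta)=(2,-16,1,-6),
  \qquad
  \alpha\delta-\beta\gamma=4 ,
\]
which is nondegenerate for $p\ge7$. This last step is the only mild obstacle: it is bookkeeping, not an obstruction.
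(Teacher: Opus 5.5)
Your proposal is correct and is essentially the paper's proof, which applies \cref{prop:hypergeometric-ratio} to $f(t)=(4t+2)/(t+1)$; your matrix entries, the rescaling by $-1$ to $(2,-16,1,-6)$, and the direct elimination check are all accurate. One small point: this quadruple is already nondegenerate for $p\ge5$, since its entries and determinant involve only the primes $2$ and $3$, although $p\ge7$ is all the application needs.
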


\begin{proof}
Apply \cref{prop:hypergeometric-ratio} to
\[
  f(t)=\frac{4t+2}{t+1}.
\]
\end{proof}

Here one may take
\begin{equation}\label{eq:central-coefficients}
  (\alpha,\beta,\gamma,\delta)=(2,-16,1,-6),
  \qquad
  \alpha\delta-\beta\gamma=4.
\end{equation}
The quadruple is nondegenerate for every prime $p\ge5$.

\begin{proof}[Proof of \cref{thm:catalan-intro}]
Put
\[
  K=\frac{p-1}{2}.
\]
For the Catalan sequence, the map
\[
  n\longmapsto\rho_n=\frac{4n+2}{n+2}
\]
is a M\"obius transformation of determinant $6$.  It is therefore injective on $\F_p$ when $p\ge7$.  Hence the adjacent pairs
\[
  (C_n,C_{n+1}),
  \qquad 0\le n\le K-2,
\]
are distinct.  Their number is
\[
  K-1=\frac{p-3}{2}.
\]
The coefficients in \eqref{eq:catalan-coefficients} satisfy the hypotheses of \cref{cor:ratio-dynamics}, which proves \eqref{eq:catalan-intro}.

For the central binomial sequence, the map
\[
  n\longmapsto\tau_n=\frac{4n+2}{n+1}
\]
has determinant $2$ and is injective for odd $p$.  The same number of adjacent pairs is therefore distinct.  Applying \cref{cor:ratio-dynamics} with \eqref{eq:central-coefficients} proves \eqref{eq:central-intro}.
\end{proof}

\section{Further remarks}

\subsection{General hypergeometric blocks}

Suppose that a nonzero sequence over $\F_p$ satisfies
\begin{equation}\label{eq:general-hypergeometric-final}
  \frac{u_{n+1}}{u_n}=\frac{An+B}{Cn+D}
\end{equation}
on an interval of indices, with $AD-BC\ne0$.  The matrix in \eqref{eq:phi-conjugate} gives the exact M\"obius recurrence for consecutive ratios.  If its four entries are nonzero and the ratios are distinct on the interval, \cref{cor:ratio-dynamics} yields a fixed power improvement over the square-root bound for the corresponding value set.  This includes many fixed-parameter Pochhammer quotients and hypergeometric products.

When one or more entries of the conjugated matrix vanish, the family may move into an affine subgroup or acquire a common projective configuration.  Such degenerations are not necessarily harder; rather, they require a different incidence input.  The affine ratio recurrence \eqref{eq:affine-ratio-recurrence} is the most useful example: it belongs to the reciprocal-affine branch and gives the explicit exponent $8/15$.

\subsection{What the two theorems abstract}

Both proofs have the same outer architecture:
\begin{enumerate}[label=(\roman*)]
  \item a structured sequence supplies many distinct internal transitions;
  \item Cauchy--Schwarz pairs transitions with a common input;
  \item eliminating that input produces a family of relative transformations;
  \item the relative family has low parameter multiplicity;
  \item an incidence theorem controls the resulting second moment.
\end{enumerate}
The difference lies in the geometry of the quotient family.  In the reciprocal-affine case it lives in $\Aff(1)$ and is handled by a Cartesian-product point-line theorem.  In the genuine M\"obius case it occupies a subgroup-escaping part of $\PSL_2(\F_p)$ and is handled by group expansion.

A low-order recurrence by itself is not enough.  For example, a second-order linear recurrence may generate only translations after elimination, and a single translation can have many parameter representations.  This is the reason that Fibonacci-type recurrences do not automatically benefit from the present method.  The decisive object is the quotient family, not the recurrence order in isolation.

The self-power sequence remains outside both theorems.  Its physical coordinate and discrete-logarithmic coordinate are strongly coupled, and no fixed-degree reciprocal-affine or M\"obius transition family is currently known.  Thus the failure of the present method for $x^x$ occurs before the incidence estimate: the required low-dimensional quotient model has not yet been found.

\subsection{Possible quantitative refinements}

The affine exponent $8/15$ is explicit because it comes directly from the exponent $11/4$ in \eqref{eq:cartesian-special}.  Any improvement for the special line families arising from the sequences above would immediately improve the value-set exponent.

The M\"obius exponent $1/2+\eta$ is qualitative.  Quantitative versions of M\"obius incidence estimates, such as those in \cite{Shkredov2021,RudnevWheeler2022,WarrenWheeler2023}, may produce explicit exponents for particular coefficient ranges.  The exact injectivity of the quotient map and the linear subgroup-coset bound proved here provide favorable inputs for such a refinement.

\section*{Statement on the use of AI}

ChatGPT 5.6 Pro was used during exploratory work and in preparing an initial draft, including algebraic calculations, literature searches, exposition, and LaTeX preparation.  The author is responsible for checking every argument and for the mathematical content of any submitted version.

\end{document}